\let\ACMmaketitle=\maketitle
\renewcommand{\maketitle}{\begingroup\let\footnote=\thanks \ACMmaketitle\endgroup}
\def\BibTeX{{\rm B\kern-.05em{\sc i\kern-.025em b}\kern-.08em
    T\kern-.1667em\lower.7ex\hbox{E}\kern-.125emX}}
\newcommand{\R}{\mathcal{R}}
\newcommand{\G}{\mathcal{G}}
\newcommand{\F}{\mathcal{F}}
\newcommand{\V}{\mathcal{V}}
\newcommand{\E}{\mathcal{E}}
\newcommand{\N}{\mathcal{N}}
\newcommand{\SA}{\mathcal{S}}
\newcommand{\1}{\textbf{1}}
\newtheorem{theorem}{Theorem}[section]
\newtheorem{lemma}[theorem]{Lemma}
\newtheorem{corollary}[theorem]{Corollary}
\begin{document}

\title{Dynamic social learning under graph constraints}

\author{Konstantin Avrachenkov, Vivek S.\  Borkar, \IEEEmembership{Fellow, IEEE},
Sharayu Moharir, and Suhail Mohmad Shah
\thanks{\textit{Authors listed alphabetically}.}
\thanks{This work was supported by the grant `\textit{Machine Learning for Network Analytics}' from the Indo-French Centre for Promotion of Advanced Scientific Research.\ The work of VB was also supported in part by a J.\  C.\  Bose Fellowship from the Government of India. The work of KA was also supported in part
by grant ``Distributed Learning and Control for Network Analysis'' from Nokia Bell Labs.}
\thanks{Konstantin Avrachenkov is with INRIA Sophia Antipolis, 2004 Route des Lucioles,
Valbonne 06902, France (e-mail: K.Avrachenkov@inria.fr).\ }
\thanks{VB and SM are and SMS was with
the Department of Electrical Engineering, Indian Institute of Technology Bombay,
Mumbai 400076, India. SMS is now with the Department of Electrical Communications Engineering, Hong Kong Uni.\ of Science and Technology, Clear Water Bay, Kowloon, Hong Kong.  (e-mail: borkar.vs@gmail.com; sharayu.moharir@gmail.com; suhailshah2005@gmail.com).}
}

\maketitle

\begin{abstract}

We introduce a model of  graph-constrained  dynamic choice with reinforcement modeled by positively $\alpha$-homogeneous rewards. We show that its empirical process, which can be written as a stochastic approximation recursion with Markov noise,  has the same probability law as a certain vertex reinforced random walk.  We use this equivalence to show that for $\alpha > 0$, the asymptotic outcome concentrates around the optimum in a certain limiting sense when `annealed' by letting $\alpha\uparrow\infty$ slowly.\

\end{abstract}

\begin{IEEEkeywords}
dynamic choice with reinforcement, optimal choice,  graphical constraints,  annealed dynamics,
vertex reinforced random walk
\end{IEEEkeywords}

\section{Introduction}
\IEEEPARstart{D}{ynamic} choice models, wherein the subsequent choice of one among finitely many alternatives depends upon the relative frequency with which it has been selected in past, have found many applications.\  This is so particularly in the scenario when the higher the frequency, the higher the probability of an alternative being chosen again.\ Such `positive reinforcement' is seen in models of herding behavior \cite{Chamley}, evolution of conventions \cite{Young1}, `increasing returns' economics \cite{Arthur}, etc.\   Similar dynamics also arise in other disciplines, e.g., population algorithms for optimization \cite{BorkarDas} and more recently, for service requests in web based platforms for search, e-commerce, etc.\ \cite{Shah}.\
One common caveat in all these is what is already the concern of the aforementioned  models of herding and increasing returns economics, viz., the risk of some initial randomness leading to the process getting eventually trapped in an undesirable or suboptimal equilibrium behavior.\  In this work we present a different take on this issue.\ Firstly, we introduce what we call a graph-constrained framework, wherein the choice at any instant is restricted by the choice during the previous instant.\ This is a realistic scenario that reduces to the classical case when the constraint graph is fully connected. Some examples are:\\

\noindent 1.\ Consider buyers buying a product on an e-commerce portal.\ They are influenced by both the average rating (assumed to be  stable) and the number of people who bought the product, as reflected in the number of reviews.
In this application the graph constraints come from
suggestions from the e-commerce portal for purchase of items from the same or related categories.\\

\noindent 2.\ Consider the task of locating an object in a large image using crowdsourced agents.\ Typically, the image is split into multiple sub-images and each agent is asked to examine a few sub-images for the desired object.\ Since the image is large, it is desirable to determine which sub-image to examine next based on partial information of the current state.\ One way to do this is to constrain the next sub-image to be one of the neighbors of the sub-image examined most recently, chosen randomly according to a probability distribution based on the current information from the crowd about these sub-images.\ See, e.g., \cite{Dempsey} for one potential real application.\\

\noindent 3.\ A graphical constraint may also arise in a scenario where a mobile sensing unit (e.g., a robot or a UAV) covers an area repeatedly.\  It has to plan its trajectory according to certain objectives which prioritize dynamically the preferred regions or `hot spots'.\ The movement, however, can only be to neighboring positions.\ If there is no central coordinator, then one is faced with the kind of problem we have.\\

\noindent 4.\  Online video sharing platforms such as YouTube make yet another application case.
Typically, after a user has seen a video, he or she is recommended a list of suggested videos.
The videos are recommended based on semantic similarity and the number of views. In this case,
the graphical constraints come from the physical limitation of the screen (typically no one scrolls
down more than one or two screens) and semantic similarities. Furthermore, the system is more likely
to recommend a content with a large number of views and the user is also more likely to click on a
content with a significant number of views. Our model not only confirms that this leads to the effect
of social bubbles \cite{Pariser2011}, but also proposes a way of tuning the recommendation mechanism to break
such bubbles.\\

As indicated above, optimality is not guaranteed in many of the aforementioned models because of the dynamics getting trapped in a suboptimal limit, the so called `trapping' phenomenon \cite{Arthur}. We show here that by suitably tuning or `annealing' the choice probabilities, the asymptotic profile can be made to concentrate on the optimal behavior.\ The tuning scheme increases the concentration of probability on the current front runner and corresponds to the natural phenomenon whereby the agents' confidence in their choices increases with increasing adoption thereof by their peers. Our agents are autonomous, though influenced by the past. Thus the final outcome is \textit{emergent} and not \textit{engineered}. In the basic model (i.e., without the aforementioned `annealing'), we get convergence to a common decision, but not necessarily to an optimal one. The  `annealed' variant on the other hand ensures the latter, i.e., asymptotic optimality. It should be emphasized that while we borrow terminology from simulated annealing (SA), our annealing scheme modulates the net drift, i.e., the driving vector field of the stochastic approximation iteration and \textit{ipso facto} its limiting o.d.e., in order to achieve optimality, its effect on the noise component is unimportant. This is unlike classical SA where it is the noise variance that is being tuned. We do add extraneous noise to the choice probabilities (see (\ref{transprob}) below) just as in SA, but its aim is to ensure that unstable equilibria are avoided almost surely, not to ensure avoidance of \textit{stable} suboptimal equilibria as in SA. The former is an easier objective as it entails only some `persistent excitation' (to borrow a phrase from control theory) to push the iterates away from unstable equillibria and their stable manifolds, and does not call for `hill climbing' with noise as in SA. The slow morphing of the drift is tantamount to morphing of the landscape itself to make it more `peaked' while retaining the same optima. (That is, ratio of the function value at a global maximum to that at a local maximum which is not a global maximum progressively increases, but their locations don't change.)
The dynamics in question is closely related to similar dynamics arising in connection with vertex reinforced random walks \cite{Benaim}, a fact we exploit.

We give brief comparisons with some  related works in multiarmed bandits in order to highlight the differences.\ In \cite{Shah}, a related  model is considered and it is observed that the process may get locked into suboptimal equilibria.\ The remedy they propose is to randomize the rewards for a fixed time window in a clever manner (dubbed a `balanced' exploration) before the aforementioned dynamic choice process takes over.\ We eschew any such modification and instead take recourse to the above scheme which  is indeed optimal in the  limit.\ This result is of a distinct flavor compared to \cite{Shah}.\ Also, our techniques are different, as are our objectives: we seek asymptotic optimality and do not consider regret.\ In \cite{Cohen}, which is methodologically closer to our work, a full fledged game problem is considered wherein many agents are concurrently exercising their choices with their payoffs depending on others' choices as well.\ Their focus is on $\varepsilon$-Nash equilibria and not on optimal behavior  as in our (non--game theoretic) work.\ While the core technique, viz., use of the multiplicative weight rule, is common between this work and \cite{Cohen}, they use a different choice thereof.\ Graphical constraints analogous to ours are used in \cite{Sankar} in a bandit framework, but they are motivated by how communication among agents can be factored into the analysis. In general, bandit algorithms do not involve graphical constraints and their focus is on non-asymptotic behavior unlike ours. However,  graphical restrictions in bandit context do arise in a number of practical applications
and have  important implications. The standard algorithms deployed to solve bandit problems such as the $\epsilon$-greedy strategy or \textit{UCB} algorithm \cite{Lattimore} may fail to  achieve optimal behavior under graph constraints, as one may get stuck with a choice with a sub-optimal reward.  We substantiate this claim in Section \ref{Simulation} with a simple example.

We draw upon the framework of \cite{Benaim} substantially.\ (See \cite{Benaim2, BenaimRaimond,  BenaimRS} for  extensions.) The key contribution of \textit{ibid.} is the analysis of a general vertex reinforced random walk using the `o.d.e.' approach to stochastic approximation. It derives very broad results about their asymptotic behavior, and then narrows these down to concrete examples with linear reinforcement to obtain stronger claims.\  Our model is pitched in between - it is a nonlinear model, but a very specific one and allows for more specific claims to be established.\  Use of annealing ideas in this context is another novelty of our work.


Such graphically constrained choice models can also be posed as stochastic combinatorial optimization problems. A well known heuristic for solving such problems is simulated annealing. However, SA \textit{with noisy observations} is well known to be sample inefficient  \cite{Bouttier}, \cite{Gelfand-1}, \cite{Gutjahr}. In fact, the best possible sample complexity results that have been obtained (Theorem 3, \cite{Bouttier}) require that the number of samples required per iteration increase to infinity with the iteration count.  This
\begin{center}
\begin{tabular}{ |c|c| }
 \hline
\multicolumn{2}{|c|}{\textbf{Key Notation}}\\
 \hline
 $\mu_i$ & Reward associated with object $i$. \\
 $m$ & Number of objects. \\
 $S_i(n)$ & Number of times $i$ was picked. \\
 $x_i(n)$ & Relative frequency  $S_i(n)/n$. \\
$\mathcal{S}_m$ & Unit simplex in $\mathbb{R}^m$.\\
$\text{int}(\mathcal{S}_m)$ & Interior of $\mathcal{S}_m$. \\
$\mathcal{G}$ & Directed graph.\\
$\mathcal{V}$ & Node set of $\mathcal{G}$.\\
$\mathcal{E}$ & Edge set of $\mathcal{G}$.\\
$\mathcal{N}(i)$ & Neighbourhood of $i$.\\
$\zeta(n)$ & Noise in reward vector.\\
$\mathcal{F}_n$ &  $\sigma\big(\xi(k), \zeta_i(k), 1 \leq i \leq m, k \leq n\big)$.\\
$\hat{\mu}_i(n)$ & Empirical mean, see (\ref{slln}).\\
$\epsilon(n)$ & Exploration time step (see (\ref{epreduce})).\\
$c(n),a(n)$ & See (\ref{epreduce}) and (\ref{varep}).\\
$f_i^\alpha(x)$ & Reinforcement function, $(\mu_i x_i)^\alpha$.\\
$\alpha$ & Reinforcement exponent, see above.\\
$\chi_\cdot(i)$ & Uniform distribution on $\mathcal{N}(i)$.\\
$p_{ij}^{\alpha}(x)$ & Transition prob. of $\{\xi(n)\}$, see (\ref{probdefn2}).\\
$\pi^\alpha(x)$ & Stationary distribution of $p_{ij}^{\alpha}(x)$.\\
$\varphi^\alpha(x)$ & See (\ref{scaled-rep}).\\
$\iota_i(n)$ & See (\ref{iota}). \\
$A$ & Adjacency matrix, $A:= [[a_{ij}]]_{i,j \in \mathcal{V}}$.\\
$T$ & Temperature, defined as $1/\alpha$.\\
$b(n)$ & Time step in $T$, see (\ref{cooling}).\\
$D$ & $\{ i \in \mathcal{V} \, : \, \mu_i = \max_j \mu_j \}$.\\
$f(n) = O(g(n))$ & $\limsup_{n  \to \infty} \frac{|f(n)|}{g(n)} <\infty$.\\
$f(n) = \Omega(g(n))$ & $g(n)= O(f(n))$. \\
$f(n) = o(g(n))$ & $\lim_{n \to \infty} \frac{|f(n)|}{g(n)} = 0$.\\
$f(n) = \omega(g(n))$ & $g(n)= o(f(n))$.\\
$f(n) = \Theta(g(n))$ & $f(n)= O(g(n)) \text{ and }g(n)= O(f(n))$.\\
 \hline
\end{tabular}
\end{center}
makes deploying SA with noisy observations quite difficult, particularly for applications where obtaining samples may entail time consuming simulations. In contrast, our algorithm needs one sample per iteration under i.i.d.\ bounded variance noise, which makes it much more sample efficient as compared to SA with noisy observations.

We describe our model in the next section and demonstrate its connection with the vertex reinforced random walk.
Section 3 provides convergence analysis of the basic scheme. In section 4 we analyze its `annealed' counterpart, leading to the desired result.\ Section 5 specializes the problem to complete graph where we can say more.\ Section 6 provides some numerical experiments.\ Three appendices sketch some  technical issues left out of the main text for ease of reading.

\textbf{Notation:} For ease of reference, we list the key notation used in the paper in the above table. This includes the standard Big-O notation used throughout the paper.

\section{Problem formulation}

In this section we set up our model of choice dynamics and the key notation. \\

\noindent \textbf{Model:} Consider a stream of agents arriving one at a time\footnote{This is for convenience. The identity of agents is irrelevant here and they may repeat as long as the choice mechanism remains the same.} and choosing one of $m > 1$ distinct objects, with a reward $\mu_i > 0$ associated with the $i$th object.  The $(n+1)$-st agent picks the $j$th object with conditional probability (conditioned on past history) $p_j(n)$, which we shall soon specify.\ Let $\xi(n) = i$ if the $n$th agent picks object $i$.\ Let $S_i(n) :=$ the number of times object $i$ was picked till time $n$ and $x_i(n) := \frac{S_i(n)}{n}, \ n \geq 1$, its relative frequency.\ Then  a simple calculation leads to the recursion
\begin{equation}
x_i(n+1) = x_i(n) + \frac{1}{n+1}\left(\mathbb{I}\{\xi(n+1) = i\} - x_i(n)\right), \ n \geq 0.\ \label{SA}
\end{equation}
Here $\mathbb{I}\{ \cdots \}$ is the `indicator function' which is $1$ if its argument is true and $0$ otherwise.\ For specificity, we arbitrarily set $x_i(0) = \frac{1}{m} \ \forall i$, suggestive of a uniform prior.
This will not affect our conclusions.\ Throughout, we use the convention $\frac{0}{0} = 0$.
The vector $x(n) := [x_1(n), \cdots , x_m(n)]^T$ takes values in the simplex of probability vectors,
$$\SA_m := \left\{x = [x_1, \cdots , x_m]^T : x_i \geq 0 \ \forall i, \ \sum_jx_j = 1\right\}.$$
We shall denote by int$(\SA_m)$ the interior of $\SA_m$. We assume that the observed reward at time $n$ for choice $i$ is not $\mu_i$, but
$\tilde{\mu}_i(n) = \mu_i +\zeta_i(n)$ where $\{\zeta_i(n), n \geq 0\}$ is i.i.d.\  zero mean noise with bounded variance.\\

 \noindent \textbf{Graphical Constraints:} We assume that the choice in the $(n+1)$-st time slot is constrained by the choice made in the $n$th slot, e.g., when, given the present choice, only some selected `nearby' or `related' choices are offered or preferred (see  examples in the introduction).\ We model this as follows.\ Consider a directed graph $\G =(\V, \E)$ where $\V, \E$ are resp., its node and edge sets, with $|\V| = m$.\ Assume that $\G$ is irreducible, i.e., there is a directed path from any node to any other node.\  Let $\N(i) := \{j \in \V : (i,j) \in \E\}$ denote the set of successors of $i$ in $\G$.  If  $i$ is chosen at any instant $n$, the next choice must come from $\N(i)$.\ We assume:\\

\noindent \textbf{(A1)} \textit{For each $i$,  $i \in \N(i)$.\ This implies a self-loop at each node, i.e., $(i,i) \in \E \ \forall \ i \in \V$.\ (Thus, in particular, $|\N(i)| \geq 2 \ \forall i$.) We also assume that the neighborhood structure is bidirectional, i.e., $i \in \N(j) \Longleftrightarrow j \in \N(i)$.}\\

 \noindent \textbf{Selection Policy:} Let $\F_n :=$ the $\sigma$-field $\sigma(\xi(t), \zeta_i(t), 1 \leq i \leq m, t \leq n)$.\ Then the vector process $x(n) \in \SA_m$, whose $i$'\textit{th} component  $x_i(n) := \frac{S_i(n)}{n}$, is assumed to satisfy (\ref{SA}) with
\begin{equation}
\mathbb{P} (\xi(n+1) = j | \F_n) = (1 - \varepsilon(n))\tilde{p}^\alpha_{\xi(n) j}(x(n)) + \varepsilon(n)\chi_j(\xi(n)).  \label{transprob}
\end{equation}
Here:
\begin{itemize}
\item
\begin{equation}
\tilde{p}_{ij}^\alpha(x) := \mathbb{I} \big\{j \in \N(i)\big\} \frac{\hat{f}_j^{\alpha,n}(x)}{\sum_{l \in\N(i)} \hat{f}_l^{\alpha,n}(x)} \ ,\ \label{probdefn}
\end{equation}
for $\hat{f}^{\alpha,n}_i(x) := (\hat{\mu}_i(n)x_i(n))^\alpha$, where
$$
\hat{\mu}_i(n) := \frac{\sum_{k=0}^n\mathbb{I}\{\xi(k) = i\}\tilde{\mu}_i(k)}{\sum_{k=0}^n\mathbb{I}\{\xi(k) = i\}}
$$
is the empirical estimate of $\mu_i$ at time $n$  recursively computed by
\begin{eqnarray}
\hat{\mu}_i(n+1)&=&
\Big(1- \frac{1}{S_i(n+1)} \Big)\hat{\mu}_i(n) + \frac{\tilde{\mu}_i(n+1)}{S_i(n+1)},\nonumber \\
&& \ \nonumber \\
&& \ \ \ \ \ \ \ \ \ \ \ \ \ \ \ \ \ \ \ \ \text{if } \xi(n+1)=i, \nonumber\\
&=& \hat{\mu}_i(n),\,\,\, \ \ \ \ \ \ \ \ \ \ \ \ \text{otherwise}, \label{slln}
\end{eqnarray}
 with $\hat{\mu}_i(0) := 0$.

\item $\{\varepsilon(n)\}$ satisfy the recursion
\begin{equation}
\varepsilon(n+1) = (1 - c(n))\varepsilon(n), \label{epreduce}
\end{equation}
where $0 < c(n) \downarrow 0, \ \sum_nc(n) = \infty, \ nc(n) \stackrel{n\uparrow\infty}{\to} 0.$
The last condition implies that $\sum_nc(n)^2 < \infty$. We also assume that
for $a(n) := \frac{1}{n+1}$,
\begin{eqnarray}
&&\sum_n\varepsilon(n)^m = \infty, \ \sum_na(n)\varepsilon(n) = \infty, \label{varep0} \\
&&\varepsilon(n) = \omega\left(\frac{1}{\sqrt{n}}\right).\ \label{varep}
\end{eqnarray}
One example is $c(n) = \frac{1}{1 + (n+1)\log (n+1)}$, which results in
$\varepsilon(n) = \Theta\left(\frac{1}{\log n}\right)$, see Appendix III for details.

\item $\chi_\cdot(i)$ is the uniform distribution on $\N(i), i \in \V$.

\end{itemize}
That is, with probability $1 - \varepsilon(n)$, we pick $\xi(n+1) = j$ with probability $p^\alpha_{\xi(n) j}(x(n))$, and with probability $\varepsilon(n)$, we pick it uniformly from $\N(\xi(n))$.\  As $\alpha\downarrow 0$, the process approaches a simple random walk on the graph that picks a neighbor with equal probability.\ As $\alpha\uparrow \infty$, the process at $i$ will (asymptotically) pick the $j \in \N(i)$ for which $\mu_jx_j = \max_{k\in\N(i)}\mu_kx_k$, uniformly.
An immediate observation is the following, proved in Appendix I.

\medskip

\begin{lemma}\label{SLLN} $\hat{\mu}_i(n) \to \mu_i$ a.s.\ $\forall i$.\ \end{lemma}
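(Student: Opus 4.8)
The plan is to decompose $\hat{\mu}_i(n)$ into its deterministic target and a normalised noise term. Since $\tilde{\mu}_i(k)=\mu_i+\zeta_i(k)$, the definition of $\hat{\mu}_i(n)$ gives, whenever $S_i(n)\geq 1$,
$$\hat{\mu}_i(n)=\mu_i+\frac{M^i_n}{S_i(n)},\qquad M^i_n:=\sum_{k=1}^{n}\mathbb{I}\{\xi(k)=i\}\,\zeta_i(k).$$
It therefore suffices to establish two facts: (i) $S_i(n)\to\infty$ a.s.\ for every $i$, and (ii) $M^i_n/S_i(n)\to 0$ a.s.\ on $\{S_i(n)\to\infty\}$.

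For (ii), I would first verify that $\{M^i_n\}$ is a zero-mean $\{\F_n\}$-martingale. By the selection rule (\ref{transprob}), $\xi(k)$ is a function of $\F_{k-1}$ and of fresh randomisation independent of $\zeta_i(k)$, while $\zeta_i(k)$ is mean-zero and independent of the past; hence $\mathbb{E}[\mathbb{I}\{\xi(k)=i\}\zeta_i(k)\mid\F_{k-1}]=0$. The same independence yields the predictable quadratic variation $\langle M^i\rangle_n=\sigma_i^2\sum_{k\leq n}\mathbb{P}(\xi(k)=i\mid\F_{k-1})$, where $\sigma_i^2:=\mathrm{Var}(\zeta_i)<\infty$ (the case $\sigma_i^2=0$ being trivial). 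The strong law of large numbers for square-integrable martingales then gives $M^i_n/\langle M^i\rangle_n\to0$ a.s.\ on $\{\langle M^i\rangle_\infty=\infty\}$, while L\'evy's extension of the Borel--Cantelli lemma shows that $S_i(n)$ and $\sum_{k\leq n}\mathbb{P}(\xi(k)=i\mid\F_{k-1})$ diverge together and are asymptotically equivalent on that set. Combining these gives $M^i_n/S_i(n)\to 0$ once (i) is known.

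The substance of the proof is (i), which is exactly where the exploration term and the hypothesis $\sum_n\varepsilon(n)^m=\infty$ enter. I would partition time into consecutive blocks $[\,n_r,n_{r+1})$ of length $m$, with $n_r:=rm$. Because $\G$ is irreducible and, by (A1), every node carries a self-loop, from whatever node is occupied at time $n_r$ there is a directed path to $i$ of length at most $m-1$, which can be padded with self-loops to fill the whole block. Each exploration move puts probability at least $\varepsilon(\cdot)/m$ on any prescribed neighbour, so, using that $\varepsilon(\cdot)$ is nonincreasing, the conditional probability of visiting $i$ somewhere in block $r$ given $\F_{n_r}$ is at least $c_0\,\varepsilon(n_{r+1})^{m-1}\geq c_0\,\varepsilon(n_{r+1})^{m}$ with $c_0:=m^{-(m-1)}$. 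Since $\sum_n\varepsilon(n)^m=\infty$ forces $\sum_r\varepsilon(n_{r+1})^m=\infty$, these conditional lower bounds sum to infinity, and the conditional (L\'evy) form of the Borel--Cantelli lemma applied to the block filtration $\{\F_{n_r}\}$ shows that $i$ is visited in infinitely many blocks almost surely, i.e.\ $S_i(n)\to\infty$.

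The step I expect to be the main obstacle is (i): obtaining a lower bound on the per-block visit probability that is uniform over the (arbitrary) node occupied at the start of the block and over the history. The difficulty is intrinsic to the graph constraint --- a single step can only reach a neighbour, so $\mathbb{P}(\xi(n{+}1)=i\mid\F_n)$ may vanish when $i\notin\N(\xi(n))$ --- which forces one to work over blocks long enough to cross the graph and to control the slow variation of $\varepsilon(\cdot)$ across each block. Once (i) is in hand, part (ii) is routine martingale limit theory.
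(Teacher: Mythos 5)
Your proof is correct, and it shares the paper's skeleton---reduce the lemma to showing $S_i(n)\uparrow\infty$ a.s.\ and then apply a law of large numbers along the visit times---but both halves are executed differently. For the LLN half, the paper simply cites the classical SLLN, whereas you run a martingale argument: $M^i_n/\langle M^i\rangle_n\to 0$ on $\{\langle M^i\rangle_\infty=\infty\}$ together with the ratio form of L\'evy's extension of Borel--Cantelli to identify $\langle M^i\rangle_n$ with $\sigma_i^2 S_i(n)$ asymptotically. This is more careful than the paper's one-line citation, since the visit times of $i$ are random and depend on the past noise through $\hat{\mu}$, so the noises sampled at those times are not transparently i.i.d.; the martingale SLLN sidesteps that issue cleanly. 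For the divergence half, which is indeed the crux, the routes genuinely differ. The paper stays on the one-step filtration and iterates the conditional Borel--Cantelli lemma $m$ times, nesting through neighborhoods: it bounds $P(\xi(n+1)=i\,|\,\F_n)\geq \frac{\varepsilon(n)}{m}\sum_{j\in\N(i)}I\{\xi(n)=j\}$, converts the divergence of the resulting weighted indicator sums into divergence of weighted conditional probabilities (this step needs the Borel--Cantelli equivalence for predictable weights $\varepsilon(n)$, not just the plain lemma), bounds those in the same way one level deeper, and after $m$ levels arrives at $\frac{1}{m^m}\sum_n\varepsilon(n)^m=\infty$ as the sufficient condition. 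You instead cut time into blocks of length $m$, use irreducibility plus the self-loops of (A1) to exhibit an explicit path from the occupied node to $i$, lower-bound the probability of traversing it by $(\varepsilon(n_{r+1})/m)^{m-1}$ uniformly in the history, and apply the conditional Borel--Cantelli lemma exactly once, on the block filtration. Both arguments consume the same hypothesis $\sum_n\varepsilon(n)^m=\infty$ from (\ref{varep0}); yours is arguably more transparent, avoids the repeated weighted Borel--Cantelli equivalences, and shows in passing that the weaker condition $\sum_n\varepsilon(n)^{m-1}=\infty$ (indeed $\sum_n\varepsilon(n)^{d}=\infty$ with $d$ the diameter of $\G$) would suffice, while the paper's nested argument needs no blocking and produces the stated $\varepsilon(n)^m$ condition directly.
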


\medskip

Thus a.s., $\lim_{n\uparrow\infty}\hat{f}_i^{\alpha,n}(x) = f^\alpha_i(x) := (\mu_ix_i)^\alpha \ \forall \ i,x,\alpha$ and
\begin{equation}
\lim_{n\uparrow\infty}\tilde{p}_{ij}^\alpha(x) = p_{ij}^\alpha(x) := \mathbb{I} \big\{j \in \N(i)\big\} \frac{f_j^\alpha(x)}{\sum_{l \in\N(i)} f_l^\alpha(x)} \ .\ \label{probdefn2}
\end{equation}
The functions $f^\alpha_i$ are monotone increasing, which captures the  `positive reinforcement', i.e., the fact that increased choice of a particular object $i$ increases its probability of being chosen in future, all else remaining the same. Each $f_j^\alpha$ is a locally  Lipschitz function  in int$(\SA_m)$, strictly increasing in  $x_j$ and satisfying  \textit{positive $\alpha$-homogeneity}: $f_j^\alpha(ax_j) = a^\alpha f_j^\alpha(x_j)$ for $a \geq 0$.
Then  $\mu_ix_i$ can be viewed as the fraction of the total reward accrued  by the fraction of population that chose $i$.\ Thus, e.g., in example 1  in the introduction, it is the average rating of $i$ times the fraction of the  customers who bought $i$ from among all who bought similar products.\ (In fact, it can be the \textit{number} thereof rather than the \textit{fraction}, because the normalization factor cancels out in the transition probability defined in (\ref{probdefn2}).) Its homogeneity property renders the choice probabilities  defined in (\ref{probdefn2}) scale-independent, as it should.\   Since our selection probability for $i$  will be proportional to $f_i^\alpha(x_i)$, a higher value of $\alpha$ makes the preference more peaked in the sense already described: it concentrates the probability mass further near global maxima, thereby  putting higher weight on `exploitation' than on `exploration'. Smaller $\alpha$ do the opposite. The `annealed' scheme we propose later slowly increases $\alpha$ to capture the trade-off  between the two.


\section{Convergence analysis}

This section  analyzes the convergence of the above scheme for fixed $\alpha$ using the theory of stochastic approximation \cite{BorkarBook}. The standard stochastic approximation algorithm is
\begin{equation}
y(n+1) = y(n) + a(n)[F(y(n), Y(n+1)) + \iota(n) + W(n+1) ] \label{SA2}
\end{equation}
where the possibly random positive stepsizes $\{a(n)\}$ satisfy $\sum_na(n) = \infty$, $\sum_na(n)^2 < \infty$, the `martingale noise' $\{W(n)\}$ satisfies $E[W(n+1)|\F'_n]=$ the zero vector for $\F'_n := \sigma(y(t),a(t),Y(t),\iota(t),W(t), t \leq n)$, $\iota(n) \to 0$ componentwise a.s., and the `Markov noise' $\{Y(n)\}$ satisfies
$P(Y(n+1) \in \cdot | \F'_n) = \hat{p}_{y(n)}( \cdot | Y(n))$
for a suitable transition probability $\hat{p}_y( \cdot | \cdot )$ parametrized by $y$. Then (\ref{SA}) has this form with $y(n) = x(n), a(n) = \frac{1}{n+1},$
\begin{eqnarray*}
W_i(n) &=& I\{\xi(n+1) = i\}   - (1 - \varepsilon(n))p^\alpha_{\xi(n)i}(x(n))  \\
&&- \ \varepsilon(n)I\{i \in \N(\xi(n))\}/m_i,
\end{eqnarray*}
$Y(n) = \xi(n)$, $\hat{p}_y(j|i) = p^\alpha_{ij}(x)$. Also, $\iota(n)$ is a vector whose $i$th component is
\begin{equation}
\varepsilon(n)(m_i^{-1} - \tilde{p}^\alpha_{\xi(n)i}(x(n))) + (\tilde{p}^\alpha_{\xi(n)i}(x(n)) - p^\alpha_{\xi(n)i}(x(n))) \to 0. \label{iota}
\end{equation}
(The presence of $\iota(n)$ does not affect the convergence, see the third `extension' in section 2.2, \cite{BorkarBook} which applies to the stochastic approximation with Markov noise as well.)
The stochastic matrix $[[p_{ij}^\alpha(x)]]_{i,j\in\V}$  is parametrized by the probability vector $x \in \SA_m$.\ For fixed $x$, let $\pi^\alpha(x)$ denote its stationary distribution, whose existence and uniqueness is ensured for each fixed $x \in$ int$(\SA_m)$ by our irreducibility assumption for $\G$
(see e.g., \cite[Section~6.1]{AFH2013}). A direct calculation shows that
$$\tilde{\pi}_i^\alpha(x) := \frac{f_i^\alpha(x)\sum_{k \in \N(i)}f^\alpha_k(x)}{\sum_\ell( f^\alpha_\ell(x)\sum_{k\in\N(\ell)}f^\alpha_k(x))}, \ i \in \V,$$
satisfies the local balance condition
$\tilde{\pi}^\alpha_i(x)p^\alpha_{ij}(x) = \tilde{\pi}^\alpha_j(x)p^\alpha_{ji}(x),$
because both sides equal
$$\frac{f_i^\alpha(x)f^\alpha_j(x)\mathbb{I} \big\{j \in \N(i)\big\} }{\sum_\ell( f^\alpha_\ell(x)\sum_{k\in\N(\ell)}f^\alpha_k(x))},$$
where $\mathbb{I} \big\{j \in \N(i)\big\} = \mathbb{I} \big\{i \in \N(j)\big\}$.\ So $\pi^\alpha(x) = \tilde{\pi}^\alpha(x)$.
%

We apply the `o.d.e.\ approach'  to our problem. Thus let $\varphi^\alpha_i(x) := f_i^\alpha(x)\sum_{j\in\N(i)}f_j^\alpha(x)/x_i$ and consider the o.d.e.
\begin{equation}
\dot{x}_i(t) = \frac{x_i(t)\varphi_i^\alpha(x(t))}{\sum_kx_k(t)\varphi^\alpha_k(x(t))} - x_i(t).\ \label{scaled-rep}
\end{equation}
Note that every equilibrium of (\ref{scaled-rep}) satisfies the fixed point equation
\begin{equation}
\pi(i) = h_i(\pi) := \frac{f_i^\alpha(\pi)\sum_{j \in \N(i)}f_j^\alpha(\pi)}{\sum_kf_k^\alpha(\pi)\sum_{\ell \in \N(k)}f_\ell^\alpha(\pi)} \ \ \forall i. \label{fixedpoint}
\end{equation}
Set $h(\cdot) := [h_1(\cdot, \cdots , h_m(\cdot)]$. By irreducibility, every such $\pi$ must be in int$(\SA_m)$.

\begin{lemma} The o.d.e.\  (\ref{scaled-rep}) has the same trajectories and the same asymptotic behavior as
the o.d.e.\
\begin{equation}
\dot{z}_i(t) = z_i(t)\left(\varphi_i^\alpha(z(t)) - \sum_jz_j(t)\varphi_j^\alpha(z(t))\right),\ \label{ode-rep}
\end{equation}
i.e., $z(t) = x(\tau(t))$ for some $t \in [0, \infty) \mapsto \tau(t) \in [0, \infty)$ which is strictly increasing and satisfies $t\uparrow\infty \Longleftrightarrow \tau(t)\uparrow\infty$.
\end{lemma}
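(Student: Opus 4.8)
The plan is to exploit the fact that the two vector fields are everywhere positive scalar multiples of one another, so that (\ref{scaled-rep}) and (\ref{ode-rep}) share the same orbits and differ only by a reparametrization of time. Writing $G(x) := \sum_k x_k\varphi_k^\alpha(x)$, I would first observe that the right-hand side of (\ref{scaled-rep}) equals $G(x)^{-1}$ times the right-hand side of (\ref{ode-rep}): indeed $\frac{x_i\varphi_i^\alpha(x)}{G(x)} - x_i = \frac{x_i}{G(x)}\big(\varphi_i^\alpha(x) - G(x)\big)$, which is exactly $G(x)^{-1}$ times $x_i\big(\varphi_i^\alpha(x) - \sum_j x_j\varphi_j^\alpha(x)\big)$. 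I would also record that both fields keep $\SA_m$ invariant, since summing either right-hand side over $i$ gives $0$ (using $\sum_i x_i = 1$), and the $i$th component vanishes whenever $x_i = 0$.

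The second step is to obtain uniform two-sided bounds on the scalar $G$. Since $x_i\varphi_i^\alpha(x) = f_i^\alpha(x)\sum_{j\in\N(i)}f_j^\alpha(x)$ with $f_i^\alpha(x) = (\mu_i x_i)^\alpha$, we have $G(x) = \sum_i f_i^\alpha(x)\sum_{j\in\N(i)}f_j^\alpha(x) \ge 0$ on $\SA_m$. The key point is that $G$ is bounded below by a strictly positive constant: on the simplex some coordinate satisfies $x_{i_0}\ge 1/m$, and by (A1) the self-loop $i_0\in\N(i_0)$ contributes the term $f_{i_0}^\alpha(x)^2 \ge (\mu_{i_0}/m)^{2\alpha}$ to $G$, so $G(x)\ge c_0 := \min_i(\mu_i/m)^{2\alpha} > 0$ for every $x\in\SA_m$. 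Likewise $x_i\le 1$ gives $G(x)\le C_0$ for some finite $C_0$. Thus $0 < c_0 \le G \le C_0$ uniformly on $\SA_m$.

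With these in hand, I would let $z(\cdot)$ solve (\ref{ode-rep}) and define the time change $\tau(t) := \int_0^t G(z(s))\,ds$. Because $G\ge c_0 > 0$, $\tau$ is strictly increasing and smooth with inverse $t(\cdot)$; setting $x(\tau) := z(t(\tau))$ and differentiating via the chain rule, $\frac{dx}{d\tau} = \dot z(t(\tau))\,\frac{dt}{d\tau} = G(x(\tau))^{-1}\,\dot z(t(\tau))$, which by the scalar relation of the first step is precisely the right-hand side of (\ref{scaled-rep}). Hence $z(t) = x(\tau(t))$ with $x$ a solution of (\ref{scaled-rep}); reversing the roles (solving instead $s'(t) = G(x(s(t)))$) gives the converse, so the orbits coincide. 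Finally, the uniform bounds yield $c_0 t \le \tau(t) \le C_0 t$, whence $\tau$ is a homeomorphism of $[0,\infty)$ onto itself with $t\uparrow\infty\Leftrightarrow\tau\uparrow\infty$; since a reparametrization of time that is bi-Lipschitz in this sense preserves $\omega$-limit sets, equilibria, and their stability type, the two o.d.e.s have the same asymptotic behavior.

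I expect the only genuine obstacle to be regularity of the vector fields near $\partial\SA_m$, where $\varphi_i^\alpha$ behaves like $x_i^{\alpha-1}$ and can blow up for $\alpha < 1$; this does not affect the time-change identity (which uses only $G > 0$ and the chain rule along a given solution), but it is the reason one invokes local Lipschitzness of each $f_j^\alpha$ on int$(\SA_m)$, so as to guarantee well-posedness and hence that ``the'' trajectories are unambiguous. Since the equilibria of both o.d.e.s lie in int$(\SA_m)$ by irreducibility, restricting the correspondence to the interior suffices for the asymptotic conclusions.
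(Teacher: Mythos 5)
Your proposal is correct and follows essentially the same route as the paper: both recognize that the right-hand sides of (\ref{scaled-rep}) and (\ref{ode-rep}) differ by the positive scalar factor $\sum_k x_k\varphi_k^\alpha(x)$, construct the time change $\tau(\cdot)$ by integrating that factor along a trajectory, and use uniform two-sided bounds on it to get $c_1 t \le \tau(t) \le c_2 t$ and hence the equivalence of asymptotic behavior. Your write-up is in fact slightly more complete than the paper's, since you explicitly prove the lower bound on the scalar factor via the self-loop assumption (A1), a point the paper asserts without proof.
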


\begin{proof} Since the r.h.s.\ of (\ref{ode-rep})  is locally Lipschitz in the interior of $\SA_m$, (\ref{ode-rep}) has a unique solution when $z(0) \in$ int$(\SA_m)$.\ We obtained  (\ref{ode-rep}) from (\ref{scaled-rep})  by multiplying the r.h.s.\ of (\ref{scaled-rep}) by the positive scalar valued bounded function $q(t) := \sum_kx_k(t)\varphi^\alpha_k(x(t))$, which is bounded away from zero uniformly in $t$.\  This amounts to a pure time scaling $t \mapsto \tau(t)$ where $\tau(\cdot)$ is specified by the well-posed differential equation $\dot{\tau}(t) = q(\tau(t))$.\ Then $z(t) := x(\tau(t))$.\ (The same device was  used in \cite{Benaim}, p.\  368.) Also, for suitable $\infty > c_2 > c_1 > 0$, $c_1t \leq \tau(t) \leq c_2t$.\  In particular, $\tau(t)\uparrow\infty$ as $t\uparrow\infty$, so the entire trajectory is covered.\ The claim follows.\ \end{proof}

 The dynamics (\ref{ode-rep}) is a special case of \textit{replicator dynamics}  \cite{Sandholm} (as is equation (3), \cite{Benaim}, p.\  368, in a similar context).\  Note also that an  equilibrium $z^*$ of (\ref{ode-rep}) must satisfy
\begin{equation}
z^*_i > 0 \ \Longrightarrow \ \varphi^\alpha_i(z^*) = \sum_jz^*_j\varphi^\alpha_j(z^*).\ \label{equil}
\end{equation}
In particular, $\varphi^\alpha_i(z^*) \equiv$ a constant for $i \in$ the support of $z^*$.

Let $A := [[a_{ij}]]_{i,j\in\V}$ be the (symmetric) adjacency matrix of $\G$.
Then for $x = [x_1, \cdots , x_m] \in \SA_m,$
$$\varphi_i^\alpha(x) = \frac{\partial}{\partial x_i}\Psi^\alpha(x) \ \mbox{for} \ \Psi^\alpha(x) := \frac{1}{2\alpha}\sum_{i,j}a_{ij}f_i^\alpha(x)f_j^\alpha(x).$$

Thus (\ref{ode-rep}) corresponds to the replicator dynamics for a potential game with potential $-\Psi^\alpha$ \cite{Sandholm}.
In what follows, by \textit{local maximum} of a function we mean a point in its domain where a local maximum is attained and  not the function value there.\  We make the following assumption which is generically true  (i.e., true for almost all parameter values, see, e.g., \cite{Matsumoto}, Chapter 2).\\

\noindent \textbf{(A2)} \textit{The equilibrium points of (\ref{scaled-rep}) (i.e., the fixed points of (\ref{fixedpoint})) are isolated and hyperbolic, i.e.,  the Jacobian matrix of $h$ at these points does not have eigenvalues on the imaginary axis. Also, their stable and unstable manifolds, which exist by hyperbolicity, intersect transversally if they do.}\footnote{This makes it a special case of a `Morse-Smale system'.}\\

In view of the preceding discussion, this amounts to the requirement that the Hessian of $\Psi^\alpha$ be nonsingular at its critical points in int$(\SA_m)$.

%

\begin{theorem}\label{replconvergence} For each $\alpha > 0$, the local maxima of $\Psi^\alpha : \SA_m \mapsto \mathbb{R}$ are stable equilibria of  (\ref{scaled-rep})  and the iterates of (\ref{SA})  converge to the set thereof, a.s.\  \end{theorem}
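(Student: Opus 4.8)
The plan is to split the statement into two essentially independent halves. Part (i): show that the local maxima of $\Psi^\alpha$ on $\SA_m$ are precisely the asymptotically stable equilibria of the replicator dynamics (\ref{ode-rep}), hence --- via the time-change lemma above, which preserves asymptotic behaviour --- of (\ref{scaled-rep}). Part (ii): invoke the o.d.e.\ approach to stochastic approximation with Markov noise to show the iterates of (\ref{SA}) converge a.s.\ to the equilibrium set of (\ref{scaled-rep}), and then rule out the unstable equilibria, leaving only the stable ones, i.e.\ the local maxima.

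For Part (i) the key observation is that $\Psi^\alpha$ is a strict Lyapunov function for (\ref{ode-rep}). Since $\varphi_i^\alpha=\partial\Psi^\alpha/\partial x_i$ (established above), along any interior trajectory
\begin{equation*}
\frac{d}{dt}\Psi^\alpha(z(t))=\sum_i\varphi_i^\alpha(z)\,\dot z_i=\sum_i z_i\big(\varphi_i^\alpha(z)\big)^2-\Big(\sum_i z_i\varphi_i^\alpha(z)\Big)^2\geq 0,
\end{equation*}
by the Cauchy--Schwarz inequality applied with the probability weights $z_i$, with equality iff $\varphi_i^\alpha(z)$ is constant on the support of $z$ --- exactly the equilibrium condition (\ref{equil}). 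Thus $\Psi^\alpha$ is nondecreasing along the flow and strictly increasing off the equilibrium set (which lies in $\mathrm{int}(\SA_m)$ by irreducibility). Consequently, if $z^*$ is a local maximum of $\Psi^\alpha$, then $V(\cdot):=\Psi^\alpha(z^*)-\Psi^\alpha(\cdot)$ is a local Lyapunov function ($V\geq 0$ near $z^*$, $V(z^*)=0$, $\dot V\leq 0$), so $z^*$ is Lyapunov stable; hyperbolicity in (A2) (equivalently, nonsingularity of the Hessian of $\Psi^\alpha$) upgrades this to asymptotic stability and conversely forces every asymptotically stable equilibrium to be a local maximum, since at a saddle or local minimum the Hessian, hence the Jacobian of the dynamics, has an unstable direction. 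This is the standard replicator-dynamics-for-potential-games correspondence, for which I would cite \cite{Sandholm}.

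For Part (ii), I would first use the fact, already displayed above, that (\ref{SA}) is a stochastic approximation recursion with Markov noise $Y(n)=\xi(n)$, vanishing bias $\iota(n)\to 0$, and martingale noise $W(n)$, whose limiting o.d.e.\ is (\ref{scaled-rep}). Boundedness is automatic since $x(n)\in\SA_m$ is compact, so the standard theory \cite{BorkarBook} yields a.s.\ convergence of $\{x(n)\}$ to an internally chain transitive invariant set of (\ref{scaled-rep}). Because $\Psi^\alpha$ is a strict Lyapunov function and the equilibria are isolated by (A2), every such set reduces to a single equilibrium; hence $x(n)$ converges a.s.\ to some equilibrium of (\ref{scaled-rep}).

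The main obstacle is the final step: showing the limit is a.s.\ one of the \emph{stable} equilibria. Here the extraneous exploration noise $\varepsilon(n)\chi_\cdot(\xi(n))$ is essential, as it supplies the `persistent excitation' transverse to the stable manifolds, which are lower-dimensional by hyperbolicity and transversality (A2). I would verify that the conditions (\ref{varep0})--(\ref{varep}) are exactly what is needed to apply a non-convergence-to-unstable-equilibria (`avoidance of traps') theorem for stochastic approximation \cite{BorkarBook,Benaim}: informally, $\varepsilon(n)=\omega(1/\sqrt n)$ keeps the injected noise dominant over the martingale fluctuations along unstable eigendirections, while $\sum_n\varepsilon(n)^m=\infty$ and $\sum_n a(n)\varepsilon(n)=\infty$ prevent the excitation from being summably small along any coordinate. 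The delicate point is checking the hypotheses of the trap-avoidance result in the present Markov-noise setting --- in particular that the component of the injected noise along each unstable eigendirection has conditional variance bounded below by a constant multiple of $\varepsilon(n)^2$ on the relevant events --- so that each unstable equilibrium, together with its stable manifold, is approached with probability zero. Combined with Part (i), this gives a.s.\ convergence to a local maximum of $\Psi^\alpha$.
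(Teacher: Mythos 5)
Your overall strategy coincides with the paper's: the same decomposition into (i) a Lyapunov/potential-function argument showing local maxima of $\Psi^\alpha$ are the stable equilibria of the time-changed replicator o.d.e.\ (your derivative computation is exactly the paper's display (\ref{Liap}), just expanded via Cauchy--Schwarz rather than written as a sum of squares), and (ii) convergence of the Markov-noise stochastic approximation to a single equilibrium under (A2), followed by an avoidance-of-traps argument using the exploration noise. Part (i) and the first half of Part (ii) are correct and match the paper.

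The gap is in the final step, which is exactly where the paper has to do its real work (all of its Appendix II). The hypothesis you propose to ``check'' --- that at each step the injected noise has conditional variance bounded below along each unstable eigendirection --- is not merely delicate; it is \emph{false} in this model, and the paper says so explicitly. Because the walk can only move within $\N(\xi(n))$, the conditional covariance of $[\mathbb{I}\{\xi(n+1)=1\},\cdots,\mathbb{I}\{\xi(n+1)=m\}]$ given $\F_n$ has zero rows and columns outside $\N(\xi(n))$, and is further degenerate along $\1$ owing to the simplex constraint; hence any fixed unstable eigendirection involving nodes not adjacent to the current state receives \emph{zero} noise at that step, and no pointwise lower bound (whether of order $\varepsilon(n)$, which is the correct per-step scale as in the paper's bound (\ref{ultimatebound}), or your $\varepsilon(n)^2$) can hold. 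The missing idea is that nondegeneracy is only needed \emph{in an averaged sense}: the paper (a) restricts to the tangent space of $\SA_m$ and proves the per-step bound $\lambda_n(i)\geq\varepsilon(n)/m$ for the $\N(i)$-block of the covariance, and (b) aggregates over the time windows of the trap-avoidance argument, using that the occupation measure of $\{\xi(k)\}$ converges to a stationary distribution $\pi^\alpha$ of full support, so the aggregated covariance behaves like $\sum_i\pi^*(i)J_i$ (with $J_i$ the diagonal indicator matrix of the coordinates in $\N(i)$), which \emph{is} positive definite on the tangent space; only then does the argument of section 4.3 of \cite{BorkarBook} go through. Moreover, the concluding concentration step cannot invoke the i.i.d.-noise estimate of \cite{BorkarBook} and must be replaced by the Markov-noise result, Theorem III.4 of \cite{Karmakar}. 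Without these ingredients your Part (ii) does not close.
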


\begin{proof} Since (\ref{scaled-rep}) and (\ref{ode-rep}) are obtained from each other by a time scaling $t \mapsto \tau(t)$ that satisfies $\tau(t) = \Theta(t)$, it suffices to consider only (\ref{ode-rep}).\ We have
\begin{eqnarray}
\lefteqn{\frac{d}{dt}\Psi^\alpha(z(t))  }\nonumber \\
&=& \sum_iz_i(t)\left(\varphi^\alpha_i(z(t)) - \sum_jz_j(t)\varphi_j^\alpha(z_j(t))\right)^2  \nonumber \\
&\geq& \  0.  \label{Liap}
\end{eqnarray}
Thus $-\Psi^\alpha$ serves as a Lyapunov function for (\ref{ode-rep}), implying that it converges to the set of  critical and Kuhn-Tucker points of $\Psi^\alpha$.\ The local maxima will then correspond to stable equilibria.\ We next argue that the iterates converge to some local maximum a.s.\  By Corollary 8, p.\ 74, \cite{BorkarBook}, for stochastic approximation with Markov noise, combined with the first bullet of section 2.2, p.\ 16,  and Corollary 4, p.\  18, \cite{BorkarBook} (both of which which work with Markov noise for exactly identical reasons) and (A2),  the iterates converge a.s.\  to  a single, possibly sample path dependent,  critical or Kuhn-Tucker point of $\Psi^\alpha$.\  That it  must be a stable equilibrium,  i.e., a local maximum, follows by a variant of the theory developed in section 4.3, pp.\  40-47, \cite{BorkarBook}.\ This argument is very technical and is sketched in Appendix II.\ \end{proof}

The next lemma is similar to Theorem 6.3 of \cite{Benaim}, see also Theorem  5.1 of \cite{Arthur2}, reproduced as Chapter 10 of \cite{Arthur}. We sketch a brief proof for the sake of completeness.

\begin{lemma}\label{localmax} The probability of convergence of $\{x(n)\}$ in (\ref{SA}) to any local maximum of $\Psi^{\alpha}$ in $\SA_m$ is strictly positive.\ \end{lemma}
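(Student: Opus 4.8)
The plan is to fix a local maximum $z^{*}\in\text{int}(\SA_m)$ of $\Psi^{\alpha}$ and reduce the claim to an \emph{eventual confinement} statement. By Theorem \ref{replconvergence} and (A2), $z^{*}$ is an isolated, asymptotically stable equilibrium of (\ref{ode-rep}), hence (after the time change) of (\ref{scaled-rep}); moreover $\{x(n)\}$ converges a.s.\ to the (isolated) set of equilibria. Choose $\delta>0$ so small that the closed ball $\bar{B}:=\{z:|z-z^{*}|\le\delta\}$ lies in the basin of attraction of $z^{*}$ and contains no other equilibrium. Then, up to a null set, the event $\{x(n)\to z^{*}\}$ coincides with the event that $\{x(n)\}$ is eventually contained in $\bar{B}$. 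So it suffices to exhibit, with strictly positive probability, a (random) time after which the iterates never leave $\bar{B}$. I would split this into a \emph{reachability} step and a \emph{trapping} step and then concatenate them.

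For reachability, I would exploit the exploration term in (\ref{transprob}). Since $\varepsilon(n)>0$ for every $n$ and $\chi_{\cdot}(i)$ is uniform on $\N(i)$, each admissible next choice $j\in\N(\xi(n))$ has conditional probability at least $\varepsilon(n)/|\N(\xi(n))|>0$. Using the self-loops guaranteed by (A1) together with the irreducibility of $\G$, one can prescribe, from any large starting time $n_{1}$, a finite vertex sequence $\xi(n_{1}+1),\dots,\xi(n_{1}+L)$ that respects the edge constraints and whose visit proportions steer the relative-frequency vector $x(\cdot)$ into the smaller ball $B_{\delta/2}(z^{*})$. Because the increments in (\ref{SA}) are $\Theta(1/n)$, achieving a macroscopic displacement requires $L=\Theta(n_{1})$ steps; the self-loops let one dwell at a vertex and irreducibility lets one switch between vertices, so the visit frequencies can be allocated to approximate the interior target $z^{*}$. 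This prescribed sequence is a finite intersection of events each of strictly positive conditional probability, so its probability is a finite product of positive numbers and is therefore strictly positive (though possibly minuscule). Hence $\mathbb{P}\big(x(n_{1}+L)\in B_{\delta/2}(z^{*})\big)>0$ for every sufficiently large $n_{1}$.

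For trapping, I would condition on $x(n_{2})\in B_{\delta/2}(z^{*})$ with $n_{2}:=n_{1}+L$ large and invoke the locking (trapping) lemma for stochastic approximation near an asymptotically stable equilibrium—the Markov-noise version already used in the proof of Theorem \ref{replconvergence} (cf.\ the attractor results of \cite{BorkarBook}). Because the step sizes $a(n)=\tfrac{1}{n+1}$ are square-summable and the tail noise energy $\sum_{n\ge n_{2}}a(n)^{2}$ is small for large $n_{2}$, the supermartingale behaviour of $-\Psi^{\alpha}$ near $z^{*}$ (equivalently, a quadratic Lyapunov function from the linearization at $z^{*}$) combined with a maximal inequality confines $\{x(n)\}$ to $\bar{B}$ for all $n\ge n_{2}$ with conditional probability at least some $\rho>0$, uniformly over starting points in $B_{\delta/2}(z^{*})$. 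Concatenating via the conditional Markov structure of (\ref{transprob}) gives $\mathbb{P}(x(n)\to z^{*})\ge\rho\,\mathbb{P}\big(x(n_{2})\in B_{\delta/2}(z^{*})\big)>0$, which is the assertion.

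The main obstacle I expect is the reachability/steering step under the graph constraints with vanishing increments: making precise that the exploration floor $\varepsilon(n)\chi$ and irreducibility together suffice to push the \emph{slowly moving} empirical process into an arbitrarily small neighbourhood of an interior point with positive (if tiny) probability, while simultaneously arranging this to happen at an arbitrarily late time $n_{2}$ so that the trapping constant $\rho$ can be taken uniform. The trapping step itself is essentially a citation to the stochastic-approximation machinery, and the reduction is routine given Theorem \ref{replconvergence} and (A2).
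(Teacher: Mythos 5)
Your proposal is correct and takes essentially the same route as the paper: a reachability step (positive probability of steering $\{x(n)\}$ into the basin of attraction of the local maximum, using irreducibility of $\G$ and the strictly positive conditional probabilities furnished by the exploration term) followed by a trapping step that cites the stochastic-approximation convergence-probability result for Markov noise, which is exactly Theorem III.4 of \cite{Karmakar} invoked by the paper. The paper's version is simply terser—it steers into the domain of attraction $O$ rather than a small ball around $x^*$ and then applies \cite{Karmakar} directly—while you supply additional (sound) detail on the $\Theta(n_1)$-step steering and the concatenation.
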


\begin{proof} Let $x^*$ be a local maximum and $O$ its domain of attraction for  (\ref{scaled-rep}).  Since the graph is  irreducible and the probability of next choice being $j$ is strictly positive $\forall \ j \in \N(i)$ when the current choice is $i$, it follows that the probability of $\{x(n)\}$ reaching $O$ from  any initial condition in finitely many steps is strictly positive. Once in $O$, the probability of convergence to $x^*$ is strictly positive by Theorem III.4 of \cite{Karmakar}, implying the claim. \end{proof}

We have
$$\Psi^\alpha(\pi) = \frac{1}{2\alpha}\sum_{i,j}(\mu_i\mu_j)^\alpha a_{ij}\pi_i^\alpha\pi_j^\alpha.$$

\begin{corollary}\label{optprob} The local maxima of $\Psi^\alpha$ are of the form $\pi(i) = z(i)^\frac{1}{\alpha}$ where $z$ is a local maximum of the quadratic form in $\{x_i\}$ given by $\sum_{i,j}x_ix_j(\mu_i\mu_j)^\alpha a_{ij}$, over the set
$$B^\alpha := \{y : y(i) \geq 0 \ \forall i, \ \sum_iy(i)^{\frac{1}{\alpha}} = 1\}.$$
\end{corollary}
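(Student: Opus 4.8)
The plan is to prove the corollary by an explicit change of variables that straightens out the exponents. Writing the quadratic form as $Q(z) := \sum_{i,j} z_i z_j (\mu_i\mu_j)^\alpha a_{ij}$ and substituting $z_i = \pi_i^\alpha$ into the displayed expression for $\Psi^\alpha$ immediately gives the pointwise identity $\Psi^\alpha(\pi) = \frac{1}{2\alpha} Q(z)$, since $\pi_i^\alpha \pi_j^\alpha = z_i z_j$. Thus $\Psi^\alpha$ on $\SA_m$ and $Q$ on the appropriate image set differ only by composition with this substitution and by the fixed positive factor $\frac{1}{2\alpha}$. The entire content of the corollary is therefore that this substitution is a bijection of the right domains which preserves the local-maximum structure.

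First I would set up the map $\Phi : \SA_m \to B^\alpha$ defined componentwise by $\Phi(\pi)_i := \pi_i^\alpha$, and verify that it is a homeomorphism onto $B^\alpha$. Since $\alpha > 0$, the scalar map $t \mapsto t^\alpha$ is a strictly increasing continuous bijection of $[0,\infty)$ onto itself, with continuous inverse $t \mapsto t^{1/\alpha}$; applying it coordinatewise gives a continuous injection with continuous inverse $\Phi^{-1}(z)_i = z_i^{1/\alpha}$. It remains to check that $\Phi$ carries $\SA_m$ exactly onto $B^\alpha$: if $\pi \in \SA_m$ then $z_i = \pi_i^\alpha \geq 0$ and $\sum_i z_i^{1/\alpha} = \sum_i \pi_i = 1$, so $z \in B^\alpha$, and the same computation run backwards shows that every $z \in B^\alpha$ is the image of the point with coordinates $\pi_i = z_i^{1/\alpha} \in \SA_m$. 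Hence $\Phi$ is a homeomorphism of $\SA_m$ onto $B^\alpha$.

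With the homeomorphism in hand the conclusion is immediate: because $\Phi$ maps neighborhoods in $\SA_m$ onto neighborhoods in $B^\alpha$ and conversely, and because $\Psi^\alpha = \frac{1}{2\alpha}\, Q \circ \Phi$ with $\frac{1}{2\alpha} > 0$, a point $\pi^*$ is a local maximum of $\Psi^\alpha$ over $\SA_m$ if and only if $z^* := \Phi(\pi^*)$ is a local maximum of $Q$ over $B^\alpha$. This is exactly the asserted correspondence $\pi(i) = z(i)^{1/\alpha}$.

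The one point that needs care — and the only place a naive argument would go wrong — is that $\Phi$ is \emph{not} a diffeomorphism at boundary points of $\SA_m$: when $\alpha \neq 1$ the inverse $t \mapsto t^{1/\alpha}$ has vanishing or infinite derivative at $0$, so on faces where some $\pi_i = 0$ one cannot transport critical-point or Lagrange-multiplier conditions through $\Phi$. The remedy is precisely to argue topologically, as above: ``local maximum'' is an order-theoretic notion defined purely through the values of the function on a neighborhood, and such notions are preserved by any homeomorphism followed by multiplication by a positive constant, with no differentiability required. I expect this distinction to be the main (and essentially the only) obstacle, and it is dispatched simply by keeping the argument at the level of homeomorphisms rather than derivatives.
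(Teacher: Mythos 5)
Your proof is correct and follows essentially the same route as the paper, which treats the corollary as an immediate consequence of the displayed identity $\Psi^\alpha(\pi) = \frac{1}{2\alpha}\sum_{i,j}(\mu_i\mu_j)^\alpha a_{ij}\pi_i^\alpha\pi_j^\alpha$ via the substitution $z_i = \pi_i^\alpha$. Your explicit verification that the coordinatewise power map is a homeomorphism of $\mathcal{S}_m$ onto $B^\alpha$ (and your observation that one must argue topologically rather than through derivatives at the boundary when $\alpha \neq 1$) simply makes precise what the paper leaves implicit.
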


\section{`Annealed' dynamics}\label{anneal}

In this section, we consider the `annealed' dynamics. That is,
taking a cue from simulated annealing \cite{Hajek}, we  consider the asymptotics as $\alpha\uparrow \infty$, corresponding to the `temperature' $T := 1/\alpha \downarrow 0$, slowly with time.\ A behavioral interpretation is that the agents exhibit a herd behavior, weighing in public opinion more and more with time.
We first analyze the optimization problem described in Corollary \ref{optprob} as $\alpha\uparrow\infty$.\ 
The set of limit points of $B^\alpha$ as $\alpha\uparrow\infty$ is given by (see Fig.\ \ref{collapse})
$B^\infty := \cap_{\alpha > 0}\overline{(\cup_{\alpha' > \alpha}B^{\alpha'})}$
$\supset B^* :=  \{e_i, 1 \leq i \leq m\},$
where $e_i, 1 \leq i \leq m,$ are the unit coordinate vectors.\  
Let
\begin{equation}
\label{D}
D := \{i \in \V : \mu_i = \max_j\mu_j\}
\end{equation}
and $\Pi^\alpha := \{\pi \in \SA_m : \pi$ is a local maximum of $\Psi^\alpha\}, \alpha >  0$.

\begin{figure}\
      \includegraphics[width=72mm,height=60mm]{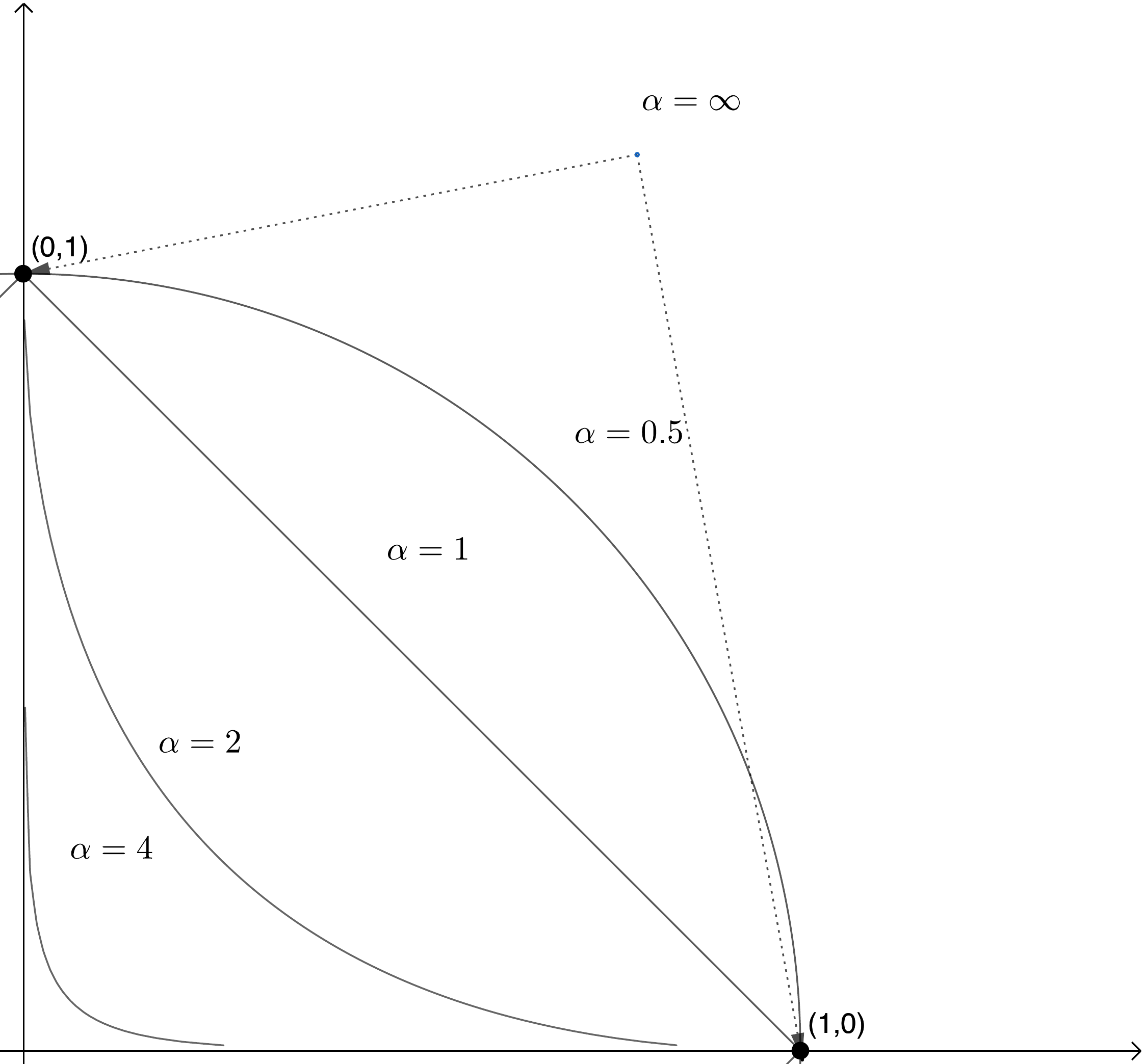}
      \caption{An illustration of the collapse of sets $B^\alpha$ to $B^\infty$.\ }
      \label{collapse}
 \end{figure}

\begin{lemma}\label{pointed} If $\alpha_n\uparrow\infty$ and $\pi_n \in \Pi^{\alpha_n}$, then $\pi_n \to B^*$.  \end{lemma}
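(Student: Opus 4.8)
The plan is to prove the stronger statement that for every $\alpha > 1$ the set $\Pi^\alpha$ of local maxima of $\Psi^\alpha$ coincides exactly with $B^*$; the lemma is then immediate, since $\alpha_n\uparrow\infty$ forces $\alpha_n > 1$ for all large $n$, whence $\pi_n\in B^*$ and trivially $\pi_n\to B^*$. Thus the whole task reduces to the claim: \emph{no} point of $\SA_m$ with two or more strictly positive coordinates can be a local maximum of $\Psi^\alpha$ when $\alpha > 1$. (In particular, no spreading of mass is locally optimal, which is exactly the trapping-at-a-vertex phenomenon discussed in the introduction.)

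To establish this, write $v_i := \mu_i x_i$ and fix a candidate $\pi$ with $\pi_a,\pi_b > 0$ for some $a\neq b$. The segment $t\mapsto \pi + t(e_a-e_b)$ stays in $\SA_m$ for $|t|$ small, so $u := e_a-e_b$ is an admissible two-sided direction, and I would test the second-order behaviour of $\Psi^\alpha$ along it. Using $\varphi_i^\alpha = \partial_i\Psi^\alpha$ one computes the three relevant Hessian entries; the diagonal ones carry the term $(\alpha-1)\mu_a^2 v_a^{\alpha-2}\sum_{l\in\N(a)}v_l^\alpha$, and here the self-loop hypothesis (A1) is decisive, since $a\in\N(a)$ gives $\sum_{l\in\N(a)}v_l^\alpha\geq v_a^\alpha$. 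Setting $P := \mu_a v_a^{\alpha-1}$ and $Q := \mu_b v_b^{\alpha-1}$, this yields $\partial_a^2\Psi^\alpha\geq(2\alpha-1)P^2$ and $\partial_b^2\Psi^\alpha\geq(2\alpha-1)Q^2$, while $a_{ab}\leq 1$ gives $\partial_a\partial_b\Psi^\alpha\leq\alpha PQ$. These combine to
$$u^\top(\nabla^2\Psi^\alpha)u = \partial_a^2\Psi^\alpha + \partial_b^2\Psi^\alpha - 2\,\partial_a\partial_b\Psi^\alpha \geq (2\alpha-1)(P^2+Q^2) - 2\alpha PQ \geq (\alpha-1)(P^2+Q^2) > 0,$$
the last inequality by $2PQ\leq P^2+Q^2$. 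A strictly positive second derivative along an admissible two-sided direction rules out a local maximum at $\pi$ (if the first derivative along $u$ is nonzero, $\Psi^\alpha$ already increases on one side; if it vanishes, it increases on both), proving the claim and hence $\Pi^\alpha\subseteq B^*$.

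To obtain $\Pi^\alpha = B^*$ rather than mere inclusion, I would check that each vertex $e_k$ is indeed a local maximum: perturbing mass out of $e_k$ by $\delta$ decreases the dominant self-term $\mu_k^{2\alpha}x_k^{2\alpha}$ linearly in $\delta$, whereas every newly activated cross-term is $O(\delta^\alpha)$ and hence negligible for $\alpha > 1$. The one genuinely delicate point is the Hessian estimate of the middle paragraph: the entire argument hinges on the observation that the self-loops force the positive diagonal curvature to dominate the pairwise coupling along transfer directions, which is precisely what prevents any spreading of mass from being locally optimal; everything else is bookkeeping. The collapse picture of $B^\alpha$ underlying Corollary \ref{optprob} gives an equivalent but less direct route to the same conclusion.
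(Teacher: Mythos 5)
Your proposal is correct, and your Hessian computation checks out: with $v_i=\mu_i x_i$ one has $\partial_a\Psi^\alpha=\mu_a v_a^{\alpha-1}\sum_l a_{al}v_l^\alpha$, so the self-loop in (A1) gives $\partial^2_{aa}\Psi^\alpha\ge(2\alpha-1)P^2$, while $a_{ab}\le 1$ gives $\partial^2_{ab}\Psi^\alpha\le\alpha PQ$, whence $u^T\nabla^2\Psi^\alpha u\ge(\alpha-1)(P^2+Q^2)+\alpha(P-Q)^2>0$; moreover, working only along the segment $\pi+t(e_a-e_b)$ with $\pi_a,\pi_b>0$ sidesteps any smoothness issue at zero coordinates when $\alpha<2$. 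But your route is genuinely different from the paper's. The paper argues through Corollary \ref{optprob}: after normalizing $\max_i\mu_i=1$, it identifies local maxima of $\Psi^\alpha$ with local maxima of the quadratic form $\sum_{i,j}x_ix_j(\mu_i\mu_j)^\alpha a_{ij}$ over the sets $B^\alpha$, and then uses the collapse of $B^\alpha$ onto $B^\infty$ together with the uniform vanishing of the form away from $B^*$ as $\alpha\uparrow\infty$; this is an intrinsically asymptotic, set-collapse argument, and it delivers only that the distance from $\pi_n$ to $B^*$ tends to zero. You instead prove a finite-$\alpha$ statement: for every fixed $\alpha>1$ the local maxima of $\Psi^\alpha$ are exactly the unit coordinate vectors, so $\pi_n$ eventually \emph{lies in} $B^*$ --- a strictly stronger conclusion than the lemma requires. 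What your approach buys: it is elementary and quantitative (an explicit $(\alpha-1)$ curvature lower bound), it isolates (A1) as the decisive structural hypothesis (without self-loops the claim genuinely fails, e.g.\ a star graph without self-loops has an interior local maximum of $\Psi^\alpha$), and it recovers at finite $\alpha$ the localization-at-vertices phenomenon known for strongly reinforced walks \cite{BenaimRS}. What the paper's approach buys: by tracking the values of the quadratic form it also records where the \emph{global} maximum migrates, namely to the vertices $e_i$ with $\mu_i=\max_j\mu_j$, which is the geometric picture (Fig.~\ref{collapse}) that feeds into the annealing argument of Theorem \ref{main}; your argument locates the set of local maxima but, by itself, says nothing about which vertices carry the largest value of $\Psi^\alpha$.
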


\begin{proof} We are concerned here only about the relative sizes (i.e., ratios) of the summands in the definition of $\Psi^\alpha$. So we may assume that $\max_i\mu_i = 1$ and drop the factor $\frac{1}{2\alpha}$ in the definition of $\Psi^\alpha$.\   This simplifies the analysis while not affecting the location of local maxima and the relative magnitudes of the function values there.\ Let $S^* := \{i : \mu_i = 1\}$. Then
$\sum_{i,j}(\mu_i\mu_j)^\alpha a_{ij}x(i)x(j) \stackrel{\alpha\uparrow\infty}{\rightarrow} 0$
uniformly outside any relatively open neighborhood of $B^*$ in $\SA_m$. Hence
\begin{eqnarray*}
\lefteqn{\max_{x \in \SA_m}\sum_{i,j}(\mu_i\mu_j)^\alpha a_{ij}x(i)x(j)} \\
&\stackrel{\alpha\uparrow\infty}{\rightarrow}& \max_{x\in B^\infty}\sum_{i,j}(\mu_i\mu_j)^\alpha a_{ij}x(i)x(j) =1,
\end{eqnarray*}
which is attained at some $e_i, i \in S^*$. The claim follows. \end{proof}

\bigskip

Recall from (\ref{fixedpoint}) that $\pi^\alpha$ is a (not necessarily unique) solution to the fixed point equation
\begin{eqnarray}
\pi^\alpha (i) &:=& \frac{f_i^\alpha(\pi^\alpha)\sum_{k \in \N(i)}f^\alpha_k(\pi^\alpha)}{\sum_\ell(f^\alpha_\ell(\pi^\alpha)\sum_{k\in\N(\ell)}f^\alpha_k(\pi^\alpha))}. \   \ \label{FP0}
\end{eqnarray}
Decrease $T := 1/\alpha$ slowly according to the iteration
\begin{equation}
T(n+1) = (1 - b(n))T(n), \ n \geq 0, \label{cooling}
\end{equation}
where $1 > b(n) \downarrow 0$ are stepsizes satisfying
\begin{equation}
\sum_nb(n) = \infty, \ nb(n) \stackrel{n\uparrow\infty}{\rightarrow} 0, \ b(n) = o(c(n)).\ \label{cool}
\end{equation}
The second condition implies $\sum_nb(n)^2 < \infty$.
Assume that $x(0) \in$ int$(\SA_m)$.\ This is not a restriction, since $x(n) \in$ int$(\SA_m)$  from some $n$ on when all possible choices have been made at least once and the above requirement can be ensured simply by counting time from then on.\ Our main result is the following, reminiscent of `\textit{stochastically stable}' equilibria of \cite{Young2}.

\begin{theorem}\label{main} $\sum_{i\in D}x_i(n) \to  1$ a.s.
\end{theorem}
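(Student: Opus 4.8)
The plan is to treat the coupled recursions (\ref{SA}) and (\ref{cooling}) as a singularly perturbed stochastic approximation in which $x(n)$ is the fast variable (stepsize $a(n)=\frac{1}{n+1}$) and $T(n)=1/\alpha(n)$ is the slow variable. Since $b(n)=o(c(n))$ and $nb(n)\to 0$ force $b(n)=o(a(n))$, the exponent $\alpha(n)$ is quasi-static on the natural timescale of $x(n)$: over any window in which $x$ changes by $O(1)$, $\alpha$ changes negligibly. First I would invoke a two-timescale analogue of Theorem \ref{replconvergence} to argue that, with $\alpha$ frozen at its current value, $x(n)$ tracks the replicator flow (\ref{ode-rep}) and hence stays asymptotically close to the (slowly moving) set $\Pi^{\alpha(n)}$ of local maxima of $\Psi^{\alpha(n)}$. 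Feeding in Lemma \ref{pointed} ($\Pi^{\alpha_n}\to B^*$ as $\alpha_n\uparrow\infty$) then shows that $x(n)$ approaches the corner set $B^*=\{e_i\}$. The content of the theorem is that, among these corners, only those indexed by $D$ survive.

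The crux is to rule out asymptotic trapping at a suboptimal corner $e_j$, $j\notin D$. Here I would work directly with the scalar process $V(n):=\sum_{i\in D}x_i(n)$, for which (\ref{SA}) gives the one-dimensional recursion
\[
V(n+1)=V(n)+a(n)\big(\mathbb{I}\{\xi(n+1)\in D\}-V(n)\big),
\]
so that $\EX[V(n+1)-V(n)\mid\F_n]=a(n)\big(\PA(\xi(n+1)\in D\mid\F_n)-V(n)\big)$. The key observation is that the exploration term in (\ref{transprob}) contributes a strictly positive, albeit $O(\varepsilon(n))$, conditional probability of moving towards $D$ from any configuration: by irreducibility of $\G$ and (A1) there is a directed path of length at most $m-1$ from $\xi(n)$ into $D$, and a run of consecutive exploration moves along it has conditional probability $\gtrsim\varepsilon(n)^m$. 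I would use $\sum_n\varepsilon(n)^m=\infty$ with a conditional Borel--Cantelli argument to guarantee that $x(n)$ enters the basin of a $D$-corner infinitely often, $\sum_na(n)\varepsilon(n)=\infty$ to show that the cumulative exploration-driven drift of $V$ towards $1$ is infinite, and $\varepsilon(n)=\omega(n^{-1/2})$ to ensure this drift dominates the martingale fluctuations $W(n)$, which enter at scale $\sqrt{a(n)}\sim n^{-1/2}$.

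The lock-in in the opposite direction comes from annealing. Once $x_i$ is non-negligible for some $i\in D$, the quantity $\mu_i x_i$ becomes the strict maximum among the relevant neighbours, and since $p^\alpha_{ki}(x)$ concentrates on $\arg\max_{l\in\N(k)}\mu_l x_l$ as $\alpha\uparrow\infty$, the reinforcement feedback on $D$ becomes overwhelming. Formally this is the increasing peakedness of $\Psi^\alpha$: at a corner only the self-loop term survives (using (A1)), whence $\Psi^\alpha(e_i)/\Psi^\alpha(e_j)=(\mu_i/\mu_j)^{2\alpha}\uparrow\infty$ for $i\in D$, $j\notin D$, so the $D$-supported face is the unique attractor of the limiting $\alpha\uparrow\infty$ dynamics. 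Combining the recurrent entry into the $D$-basin, the divergent net drift of $V$ towards $1$, and the annealed lock-in through a standard supermartingale/ODE-tracking argument would yield $V(n)\to 1$ a.s.

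I expect the main obstacle to be making this escape-and-lock-in argument rigorous under the simultaneous action of all three timescales: one must show that $\alpha$ rises slowly enough ($b(n)=o(c(n))$) that $x$ equilibrates to the local-maximum structure at each level before the landscape morphs appreciably, yet that $\varepsilon(n)$ decays slowly enough (the conditions (\ref{varep0})--(\ref{varep})) to keep injecting the drift that carries mass out of suboptimal corners, all while the reinforcement restoring force near $e_j$ does not overwhelm the $O(a(n)\varepsilon(n))$ drift towards $D$. Quantifying this competition, and upgrading ``in the $D$-basin infinitely often'' to ``a.s.\ convergence of $V$'', is where the delicate estimates lie; this is the analogue, in our drift-morphing setting, of the escape-time estimates that drive classical simulated-annealing proofs.
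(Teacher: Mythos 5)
Your two-timescale skeleton (fast $x(n)$, quasi-static $T(n)$, convergence to the slowly moving attractor set, then Lemma \ref{pointed}) matches the paper's first step, which the paper carries out via the two-timescale Markov-noise framework of \cite{Yaji}. But the crux of your argument --- noise-driven escape from suboptimal corners followed by annealed lock-in --- is not how the proof goes, and it cannot be made to work as stated. The quantitative obstruction: for any finite $\alpha(n)$, a suboptimal attractor of the frozen dynamics (\ref{scaled-rep}) is a \emph{stable} interior local maximum of $\Psi^{\alpha(n)}$, near which the restoring drift is of order $a(n)$ times the distance to the equilibrium, while the exploration term in (\ref{transprob}) contributes a drift of order $a(n)\varepsilon(n) = o(a(n))$. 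Since $\varepsilon(n)\to 0$, exploration can only shift the equilibrium by $O(\varepsilon(n))$; it can never overcome the restoring force and push the iterate out of the basin. This is exactly why the paper states in the introduction that the extraneous noise serves to avoid \emph{unstable} equilibria, not stable suboptimal ones as in simulated annealing. Your use of the conditions (\ref{varep0})--(\ref{varep}) misattributes their role: $\sum_n\varepsilon(n)^m=\infty$ is used in Appendix I to get $S_i(n)\uparrow\infty$ so that $\hat{\mu}_i(n)\to\mu_i$, and $\varepsilon(n)=\omega(1/\sqrt{n})$ is used in Appendix II for the avoidance-of-traps estimate near unstable equilibria; neither yields escape from stable ones. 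Relatedly, your Borel--Cantelli step conflates the walk $\xi(n)$ visiting $D$ with the empirical-frequency vector $x(n)$ entering the basin of a $D$-corner: a run of $m$ exploration moves at time $n$ moves $x(n)$ by only $O(m/n)$, which is negligible, so ``visits $D$ infinitely often'' says nothing about basins of attraction.

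What actually closes the argument in the paper is an analysis your proposal never performs: a characterization of how the attractor sets themselves move as $T\downarrow 0$. For each finite $\alpha$ the equilibria solve the fixed point equation (\ref{FP0}) and, by irreducibility, lie in the interior of $\SA_m$ --- corners, optimal or not, are never equilibria at finite $\alpha$. The paper defines $D^{T}$ as the closed convex hull of the solution set of (\ref{FP0}) and shows, by dividing numerator and denominator in (\ref{FP0}) by the dominant term $\max_{k,\,l\in\N(k)}[\mu_k\mu_l\pi(k)\pi(l)]^{1/T}$ and letting $T\downarrow 0$, that any convergent sequence $\tilde{\pi}_n\in D^{T(n)}$ must concentrate on indices maximizing $\mu_i^2$, i.e., on the set $D$ of (\ref{D}); Lemma \ref{pointed} enters only to force the limit point to be a corner. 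This verifies assumption A9 of \cite{Yaji}, and Theorem 4 there then delivers $\sum_{i\in D}x_i(n)\to 1$ a.s.\ with no supermartingale or escape-time estimates at all: the mechanism is that annealing morphs the landscape so that the (interior) attractors migrate toward $D$ and the fast iterate tracks them, not that noise kicks the iterate over barriers. To repair your proposal you would have to replace the escape-and-lock-in section entirely by this fixed-point set-convergence analysis (or supply a genuinely SA-type mechanism with noise tuned to barrier heights, which this algorithm by design does not possess).
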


\begin{proof} The second and third conditions in (\ref{cool}) render the pair (\ref{SA}), (\ref{cooling}) a two time scale stochastic approximation with (\ref{SA}) run on a fast time scale and  (\ref{cooling}) run on a slower time scale. In fact the situation is simpler than the general two time scale schemes because the latter does not depend on the former, the dependence is unidirectional. We shall use the results of \cite{Yaji}. In \cite{Yaji}, stochastic recursive \textit{inclusions } involving set-valued maps on both time scales are considered. In (\ref{SA}),  (\ref{cooling}), we have instead single valued Lipschitz maps  for which assumptions A1-A8 of \cite{Yaji} are  easily verified. Our slow iteration (\ref{cooling}) has a unique limit $0$, whence A10 of \cite{Yaji} is trivially satisfied. This leaves the verification of assumption A9 of \cite{Yaji}. Consider (\ref{SA}) for fixed $\alpha = 1/T, \varepsilon(n) \equiv 0,$ and define:
$$D_0^{T} := \{\pi : \pi \ \mbox{satisfies the fixed point equation (\ref{FP0})}\}.$$
Let $D^{T} :=$ the closed convex hull of $D_0^{T}$.
 Using the fact that $T(n)$ update on a slower time scale and hence are `quasi-static' for the faster time scale of $x(n)$ (cf.\ the `two time scale' methodology of \cite{BorkarBook},  secion 6.1), we first `freeze' the slow components $T(n) \approx T$ and analyze the fast iterate (\ref{SA}). By the theory of stochastic approximation with Markov noise (see \cite{BorkarBook}, Chapter 6), it tracks the o.d.e.\  (\ref{scaled-rep}), a time-scaled version of  (\ref{ode-rep}) as observed earlier.\ Thus it converges to  $D^{T}$ by Theorem \ref{replconvergence}. We next show that as $ T = T(n)\downarrow 0$ and $\tilde{\pi}_n \in D^{T(n)} \ n \geq 1,$ $\tilde{\pi}_n \to $ the set $D$ defined in (\ref{D}).\ Consider a subsequence $\widetilde{T}(n) \downarrow 0$ such that
$$\tilde{\pi}_n := \pi^{\alpha}\Big|_{\alpha = 1/\widetilde{T}(n)} \to \pi^*$$
for some $\pi^*\in \SA_m$ with support $S^*$.\  Rewrite (\ref{FP0}) as
\begin{eqnarray*}
\tilde{\pi}_n(i)
&=& \frac{\sum_{j\in\N(i)}[\mu_i\mu_j\tilde{\pi}_n(i)\tilde{\pi}_n(j)]^{1/\tilde{T}(n)}}{\sum_{i'}\sum_{j\in\N(i')}[\mu_{i'}\mu_j\tilde{\pi}_n(i')\tilde{\pi}_n(j)]^{1/\tilde{T}(n)}} \\
&=& \frac{\frac{\sum_{j\in\N(i)}[(\mu_i\mu_j\tilde{\pi}_n(i)\tilde{\pi}_n(j)]^{1/\tilde{T}(n)}}{\max_{k,l\in \N(k)}[\mu_k\mu_l\tilde{\pi}_n(k)\tilde{\pi}_n(l)]^{1/\tilde{T}(n)}}}{\sum_{i'}\frac{\sum_{j\in\N(i')}[\mu_{i'}\mu_j\tilde{\pi}_n(i')\tilde{\pi}_n(j)]^{1/\tilde{T}(n)}}{\max_{k,l\in\N(k)}[\mu_k\mu_l\tilde\in_n(k)\tilde{\pi}_n(l)]^{1/\tilde{T}(n)}}}.
\end{eqnarray*}
As $\tilde{T}(n)\downarrow 0$, this concentrates on the set of $(i,j) \in \E$ for which
\begin{eqnarray*}
\lefteqn{\mu_i\pi^*(i)\sum_{j\in  \N(i)\cap S^*}\mu_j\pi^*(j)} \\
&& = \  \max_k\left(\mu_k\pi^*(k)\sum_{\ell\in  \N(k)\cap S^*}\mu_\ell\pi^*(\ell)\right).
\end{eqnarray*}
Combined with Lemma \ref{pointed}, this implies that the measure will concentrate on the $i$ such that
$$\mu(i)^2 = \max_j\mu(j)^2,$$
i.e., on $D$.\ Setting $D^{1/T} = D$ when $T = 0$, this verifies A9 of \cite{Yaji} for our purposes\footnote{It is also clear that the limiting measure will be uniform on $D$.}. Then Theorem 4, p.\ 1435, \cite{Yaji}, holds. We note that in the notation of this theorem, $\mathscr{Y} = \{0\}$ and $\lambda(y) = D^{1/y}$, whence the claim follows.
\end{proof}

\bigskip

\begin{figure*}
  \begin{minipage}{0.5\linewidth }
    \centering\includegraphics[width=0.9\linewidth]{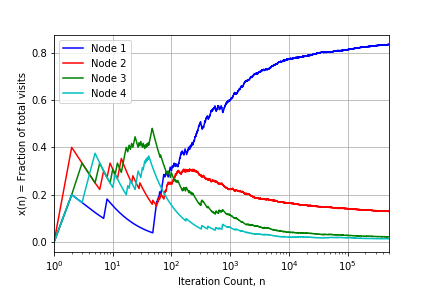}
   \subcaption{Linear Topology}
   \end{minipage}%
   \begin{minipage}{0.5\linewidth}
    \centering\includegraphics[width=0.9\linewidth]{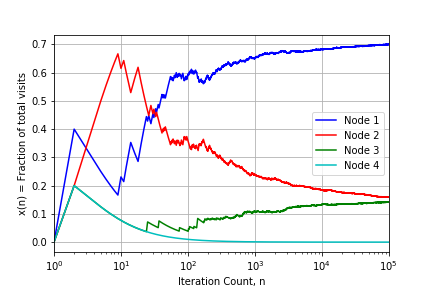}
     \subcaption{Star Topology}
	 \end{minipage}%
  \caption{ Fraction of total Visits, $x(n)$ Vs.\ Iteration Count for Linear and Star Topology.\ }
\end{figure*}

\begin{figure*}
  \begin{minipage}{0.5\linewidth }
    \centering\includegraphics[width=0.9\linewidth]{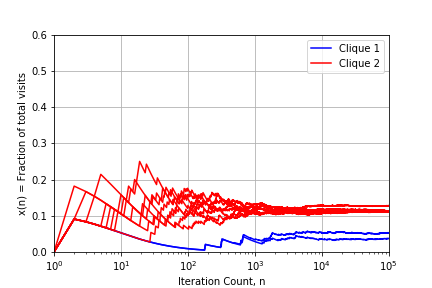}
   \subcaption*{(a) Initialize in clique-2, $\alpha$ fixed.\ }
   \end{minipage}%
   \begin{minipage}{0.5\linewidth}
    \centering\includegraphics[width=0.9\linewidth]{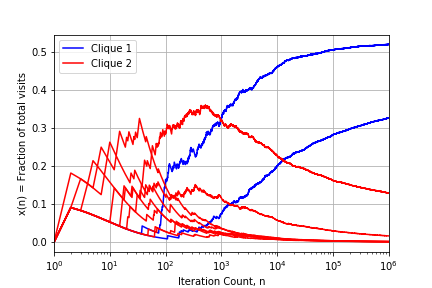}
    \subcaption*{(b) Initialize in clique-2, $\alpha \to \infty$.}
      \end{minipage}%
  \caption{ Fraction of total Visits, $x(n)$ Vs.\ Iteration Count for the two clique experiment }
\end{figure*}

\section{The unconstrainted case}\label{fullyconnected}

In this section we consider the case without graphical constraints, i.e., when the graph $\G$ is fully connected, where we can say more.
The case without graphical constraints can be viewed as a special case with $\G =$ the complete graph, i.e., $a_{ij} = 1 \ \forall \ i,j$.\ Then $\Psi^\alpha(x) = \left(\sum_if^\alpha_i(x)\right)^2$, which is convex for $\alpha \geq 1$, where the absence of graphical constraints does allow us to make stronger statements. Unfortunately this does not buy us stronger results for the $\alpha\uparrow\infty$ asymptotics. However, the story is different for a fixed $\alpha \in (0,1)$, where we indeed can say much more than in the graphically constrained case. Specifically, we get desired convergence guarantees even for a fixed $\alpha$ in this range, and make an analogy with  Ant Colony Optimization \cite{Ammar, BorkarDas}.

For $\alpha \in (0,1)$, since the expression being squared is non-negative, we can equivalently consider the problem of maximizing $\psi^\alpha(x) := \sum_if^\alpha_i(x)$, which is \textit{strictly concave}.\ Hence it has a unique maximum on $\SA_m$ to which our scheme will converge even without annealing.\ In fact, in this case,
the stationary solution
can be specified explicitly using the Lagrange multiplier technique as:
\begin{equation}
\label{xoptconc}
x_i(\infty) = \frac{\mu_i^{\alpha/(1-\alpha)}}{\sum_{k=1}^{m}\mu_k^{\alpha/(1-\alpha)}} \ .
\end{equation}
From (\ref{xoptconc}), as $\alpha \to 1$, the frequencies $x_i(\infty)$ start to
concentrate on  $D$ defined in (\ref{D}).
As seen in the simulation section, in practice one does not
need to take $\alpha$ very close to one.
If $\alpha=1$,
the replicator dynamics has the well studied linear payoffs and converges to a solution with only one nonzero component by standard arguments.

Now consider the case of $\alpha > 1$ with $\varepsilon(n) \equiv$ a constant $\varepsilon > 0$.
Note that in the unconstrained case, given $x$, the transition probability matrix $[[p^{\alpha,\varepsilon}_{ij}(x)]]$
is a stationary probability matrix with the identical rows $\pi^{\alpha,\varepsilon}(x)$ given by
$$
\pi^{\alpha,\varepsilon}_i(x) := (1-\varepsilon)\frac{f_i^{\alpha}(x)}{\sum_kf_k^{\alpha}(x)} + \varepsilon\frac{1}{m}.
$$
Hence its stationary distribution coincides with its (identical) rows. By Corollary 8, p.\ 74, \cite{BorkarBook},  the sequence $\{x(n)\}$ tracks the o.d.e.
\begin{equation}
\label{epsdyn}
\dot{x}_i(t) = \pi^{\alpha,\varepsilon}(x(t))  - x_i(t),
\end{equation}
i.e.,
$$
\dot{x}_i(t) = (1-\varepsilon)\frac{f_i^{\alpha}(x(t))}{\sum_kf_k^{\alpha}(x(t))} + \varepsilon\frac{1}{m}  - x_i(t).
$$
The stationarity condition for the above o.d.e.\ gives
\begin{equation}
(1-\varepsilon)\frac{f_i^{\alpha}(x)}{\sum_kf_k^{\alpha}(x)} + \varepsilon\frac{1}{m} - x_i = 0 \ \ \forall i. \label{fixpt}
\end{equation}
If $\varepsilon \to 1$, then by standard continuity arguments, $x \to$ the  set of solutions to (\ref{fixpt}) corresponding to $\varepsilon = 1$. This is a singleton consisting of the uniform distribution $x_i = \frac{1}{m} \ \forall i$. The map
\begin{eqnarray*}
\lefteqn{(x,\varepsilon) \mapsto F(x,\varepsilon) :=}\\
&&  (1-\varepsilon)(\sum_kf_k^\alpha(x))^{-1}[f^\alpha_1(x), \cdots , f^\alpha_m]  + \frac{\varepsilon}{m}I- x
\end{eqnarray*}
has a nonsingular Jacobian matrix $-I$ w.r.t.\ $x$ in int$(\SA_m)$ at $\varepsilon = 1$. Hence by the implicit function theorem, the fixed point $x^\varepsilon$ of (\ref{fixpt}) is an analytic function in a small neighborhood of the uniform distribution \cite{AFH2013}, i.e.,
$$
x_i(\varepsilon) = \frac{1}{m} + (1-\varepsilon) x^{(1)}_i + ... \ .
$$
Substituting this expansion in the stationarity condition (\ref{fixpt}) and equating terms with the
same powers of $1 - \varepsilon$ yields
$$
x^{(1)}_i = \frac{\mu^\alpha_i}{\sum_{k=1}^{m}\mu^\alpha_k}-\frac{1}{m}.
$$
This implies that the states with indices in the set $D$ will obtain a larger fraction
of visits in comparison with the other states.\ This is reminiscent of the Ant Colony Optimization algorithm of \cite{Ammar, BorkarDas} where
the initial randomness itself builds up the bias in favor of the optimum, to which the scheme converges \textit{with high probability}. A very fine analysis of the $\alpha > 1$ case for a related model appears in \cite{BenaimRS}.


The payoff functions $\{\varphi_i^\alpha(\cdot)\}$ in (\ref{ode-rep}) are of the form $\varphi^\alpha_i(z) = g_i(z_i)h(z)$ for  $h(\cdot) : \SA_m \mapsto (0,\infty)$ and $g_i : [0,1] \mapsto \mathbb{R}^+$, where the latter are monotone increasing.\ As shown in Lemma 4, p.\  14,  \cite{BorkarDas},  corners of $\SA_m$, i.e.,  $\{e_i\}$, are stable equilibria for (\ref{ode-rep}) and the only ones to be so.\ Moreover, the domain of attraction of $e_i$ is  $\{z \in \SA_m: z_i > z_j, \ j \neq i\}$.\ In view of the foregoing, this makes it clear how the bias for the optimum builds up starting from a uniform prior.


\section{Simulation experiments}\label{Simulation}

In this section we empirically demonstrate our theoretical results on a star and linear graph topology (with $m=4$, see Fig.~2 and 3).\ For the linear topology, $\mu = (2, \frac{1}{4},\frac{1}{2},1)$, designed so as to demonstrate the hill descending capabilities (i.e.\ jump out of the local maximum at node 4) of the algorithm.\ The noise $\zeta_i(\cdot)$ is assumed to be $N(0, 0.1)$. The random exploration parameter is set as $\varepsilon(n) := \frac{1}{\log (n+1)}$.  As can be seen in Fig.~2, $x_1(n)$ (the fraction of visits to the node with the highest $\mu$) converges to 1 as $n\uparrow\infty$.\ We remark here that the cooling schedule $\{\alpha(n)\}$ is the most important (and sensitive) parameter of the algorithm.\ A too fast or constant cooling schedule may tend to make the algorithm get stuck in the local maximum at node 4.\ The cooling schedule we used was
$
\alpha(n+1) = \alpha(n) \left( 1- \frac{1}{n \log n }  \right)^{-1}.
$
 For initial few iterations, we keep $\alpha=10^{-2}$ fixed to promote exploration.
For the star topology, $\mu = (1, \frac{1}{3},\frac{1}{3},\frac{1}{3})$.\ The cooling schedule was the same as before.\ Here, the central node, i.e.\ the node connected to all other nodes, is node 4.
For comparison purposes, we have also tried
$\mu = (2, \frac{1}{4},\frac{1}{2},1)$ with the fixed $\alpha=0.85<1$ in the complete graph setting.
The dynamics always converges to the
stationary solution $(0.98, 0.000, 0.000, 0.019)$.\ This demonstrates
our conclusion from Section~\ref{fullyconnected} that in the unconstrained case for the values of $\alpha < 1$ even
not so close to one, a very significant portion of the mass
is concentrated on the optimal node.

Our next numerical experiment is aimed at highlighting the importance of annealing  for convergence of $x(n)$ to $D$.\ We consider a graph composed of two cliques connected through a single edge.\ The number of nodes for clique-1 is 2 and those for clique-2 is 8.\ We set the noise $\zeta_i =0$ for all $i$ for this experiment. The results have been plotted in
Fig.~3.\ We set $\mu_i =1$ for $i \in$ clique-1 and $\mu_i =0.5$  for $i \in$ clique-2.\ Some points to note are:
 \begin{itemize}
\item  If we initialize the walk in clique-2 and \textit{do not} increase $\alpha \to \infty$, then the relative frequencies converge to non-zero values for nodes in clique-2.\ (In Fig.~3(a), we have set $T=0.1 \,(\alpha=10)$.)

\item If we initialize the walk in clique-2 and \textit{do} increase $\alpha \to \infty$, then the chain moves to clique-1 and stays there.\ \

\end{itemize}

 With linear topology, we  make an important comparison with  the multiarmed bandit literature.  With nodes labeled $\{1,2,3,4\}$, the $\alpha\uparrow\infty$ limit corresponds to the transition probabilities
$$p(1|1), p(1|2), p(4|3), p(4|4) = 1, \ p(i|j) = 0 \ \mbox{otherwise}.$$
That is, the chain moves deterministically to the neighbor (including itself) with the highest reward. It has two communicating classes $\{1,2\}$ and $\{3,4\}$. For $\epsilon \in (0,1)$, the $\epsilon$-greedy policy has a stationary distribution that is seen to concentrate equally on $1,4$ as $\epsilon\downarrow0$ by the symmetry of the problem. In particular, it is a suboptimal distribution. A simple two time scale argument applied to (\ref{SA}) then shows that $x(n)$ converges this suboptimal distribution. In contrast, if we consider the corresponding fully connected graph with the same reward structure, the purely greedy policy given by the $\alpha\uparrow\infty$ limit has $p(1|i) = 1 \ \forall i$ and the stationary distribution is seen to concentrate on the optimal node $1$. In the fully connected case the $\varepsilon(n)$-greedy policy with $\varepsilon(n) = \frac{1}{n}$ converges to the optimal, as shown in Theorem 3 of \cite{Auer}. Thus, a standard bandit algorithm can fail in the graph-constrained framework.

In Fig.~4, we provide a comparison of the proposed algorithm with Simulated Annealing. We briefly describe the details of the modified version of SA we use here. The SA algorithm consists of a discrete time inhomogeneous Markov chain, whose transition mechanism $P(n):=[[p_{xy}(n)]]_{x,y\in \mathcal{V}}$ for temperature $T_n$ can be formally written as:	
 \begin{equation*}
  			 p_{x, y }(n) = \begin{cases}
   					 0, & \text{if $y \notin \mathcal{N}(x)$}\\
   					 \frac{1}{|\mathcal{N}(x)|} \exp\Big\{ \frac{- \big(\hat{\mu}_x(n) - \hat{\mu}_y(n)  \big)^+}{T_n} \Big\}, & \text{otherwise}
 					 \end{cases}
			\end{equation*}
   and
           $$
           p_{x,x}(n) = 1 - \sum_{i \in \mathcal{N}(x)}  p_{x, i }(n),
           $$
where $(x)^+:= \max(0,x)$ and $\hat{\mu}_x(n)$ is the empirical mean estimate at time $n$ of object $x$. To keep the comparison to our algorithm fair we update the empirical mean in the same manner as (\ref{slln}).

Judging from Fig. 4, our algorithm achieves a better medium and long run performance in terms of relative frequency of the optimal reward for both linear and star topology. The time step for SA is kept equal to $\frac{\gamma}{\log(1+k)}$, where $\gamma=0.1$ is selected empirically to give the best performance.

\begin{figure}[h]
\label{fig:SAcomp}
\includegraphics[width=8.5cm, height =6cm]{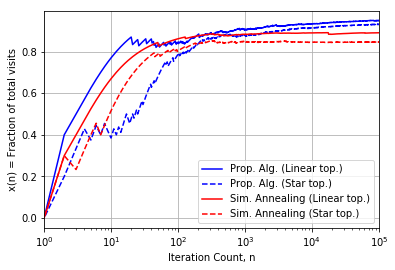}
\caption{An empirical comparison of SA with the proposed algorithm for star and linear topology. The reward vectors are kept the same as the previous experiments.}
\end{figure}

\bigskip

\noindent \textbf{Appendix I}\\

\noindent \textbf{Proof of Lemma 2.1 :}  This follows from the strong law of large numbers   if
\begin{equation}
S_i(n)  \uparrow \infty, \label{infinite}
\end{equation}
and our convergence analysis applies.\  But  (\ref{infinite}) follows from the fact $\sum_n\varepsilon(n) = \infty$, because by the conditional Borel-Cantelli lemma (Lemma 17, p.\  49, of \cite{BorkarBook}),
$$\sum_n\mathbb{I}\{\xi(n+1) = i\} = \infty \Longleftrightarrow \sum_n P(\xi(n+1) = i | \F_n) = \infty$$
a.s.\ Now $\chi(\xi(n))$ assigns mass $\frac{1}{|\N(i)|} \geq \frac{1}{m}$ to $i$ when $\xi(n) \in \N(i)$ and $0$ otherwise. Hence
$$
\sum_n P(\xi(n+1) = i | \F_n)
\ge  \frac{1}{m}\sum_{j \in \N(i)}\sum_n\varepsilon(n)I\{\xi(n) = j\},
$$
By the conditional Borel-Cantelli lemma,
\begin{eqnarray*}
&& \frac{1}{m}\sum_{j \in \N(i)}\sum_n\varepsilon(n)I\{\xi(n) = j\} = \infty \\
&\Longleftrightarrow& \frac{1}{m} \sum_{j \in \N(i)}\sum_n\varepsilon(n)P(\xi(n) = j|\F_{n-1}) = \infty.
\end{eqnarray*}
Using a similar bound for $P(\xi(n) = j|\F_{n-1})$ yields
\begin{eqnarray*}
&& \frac{1}{m}\sum_{j \in \N(i)}\sum_n\varepsilon(n)P(\xi(n) = j|\F_{n-1}) \\
&\geq&  \frac{1}{m^2}\sum_{k \in \N(j)}\sum_{j \in \N(i)}\sum_{n\geq 1}\varepsilon(n)^2I\{\xi(n-1) = k\},
\end{eqnarray*}
and so on, so  combining all these inequalities and using (\ref{varep0}),
$$
\frac{1}{m^m}\sum_{n \geq m}\varepsilon(n)^m = \infty
 \Longrightarrow \sum_n\mathbb{I}\{\xi(n+1) = i\} = \infty
$$
a.s. Thus (\ref{infinite}) holds.\\

\noindent \textbf{Appendix II}\\

Here we sketch the proof of the `avoidance of unstable equilibria a.s.' (also known as `avoidance of traps') result invoked in the proof of Theorem \ref{replconvergence}. This is based on the results of section 4.3, \cite{BorkarBook}, pp.\  44-51, originally from \cite{Borkar}. These in turn depend on the estimates of section 4.1, pp.\  31-41 of \cite{BorkarBook}.  We sketch the main steps, referring the reader to the above for details  common to both and highlight only the differences between the present set-up and that of section 4.3, \cite{BorkarBook}. For later reference, we use (A$n)^*, n \geq 1,$ to denote the assumptions of \textit{ibid.} and simply (A$n$) to refer to our own.

The proof of \textit{ibid.} is broadly in two parts. The bulk of the work is for the first part, which is to show that the iterates will  keep getting pushed away from the stable manifolds of unstable equilibria sufficiently often, a.s. This is an argument based on the conditional Borel-Cantelli lemma. In \cite{BorkarBook}, this argument relies on showing that the aggregated martingale noise over an interval approaches  a non-degenerate gaussian distribution under suitable scaling, by the central limit theorem for martingale arrays. This is ensured by assumption (A6)$^*$. The topological assumption (A5)$^*$ then ensures that there is enough probability of the iterates getting pushed away adequately and often enough that they move away from the manifold, to the domain of attraction of stable equilibria. The second part then says that it will converge to a stable equilibrium almost surely. This uses a concentration result from section 4.1 of \cite{BorkarBook}, which quantifies the probability of convergence to a stable equilibrium given that the current iterate is in its domain of equilibrium. For us, the second part simply amounts to replacing the latter result by its counterpart for Markov noise from \cite{Karmakar}. The first part is  what takes the most effort. While (A5)$^*$ can be ensured by imposing a reasonable assumption, (A6)$^*$ turns out to be more elusive, precisely because of graph constraints that imply motion only to neighboring nodes. Thus, the natural counterpart of (A6)$^*$ that would require the conditional covariance of $\xi(n+1)$ given $\F_n$ to be non-singular is simply false. Luckily, we need such non-singularity to hold in an average sense. Bulk of our work below will be towards establishing this. The condition (A7)$^*$ is simply replaced by its suitable counterpart here, so it is not a major issue.

It should also be added that the assumptions and proof of \cite{Borkar} followed here are among many such for `avoidance of traps' results, see \cite{Brandiere, Pemantle},  to name some others. Thus it seems eminently possible to adapt these to give alternative sets of assumptions and corresponding proofs  for Markov noise.

We begin by discussing the key assumptions (A5)$^*$-(A8)$^*$ in section 4.3, \cite{BorkarBook}, that are specific to the results therein. Assumptions (A1)$^*$-(A4)$^*$ of \textit{ibid.} are  generic assumptions for stochastic approximation that are already covered here.\ Let $m_i = |\N(i)|$.\
Define the $\{\F_n\}$-martingale difference sequence
\begin{eqnarray}
M_i(n+1)  &=& I\{\xi(n+1) = i\}   - (1 - \varepsilon(n))p^\alpha_{\xi(n)i}(x(n)) \nonumber \\
&&- \ \varepsilon(n)I\{i \in \N(\xi(n))\}/m_i. \label{ondu}
\end{eqnarray}
Let $a(n) := \frac{1}{n+1}, n \geq 0$.\ Then (\ref{SA}) can be written as
\begin{eqnarray}
x_i(n+1) &=& x_i(n) \ +
a(n)\Bigg[(1 - \varepsilon(n))p^\alpha_{x_i(n)j}(x(k)) \ +  \nonumber \\
&& \frac{\varepsilon(n)}{m_i}\Bigg] + \ a(n)M_i(n+1), \ 1 \leq i \leq m. \label{erdu}
\end{eqnarray}

Let $W$ denote the complement of the union of the domains of attraction of stable equilibria, i.e., the local maxima of $\Psi$.\ One important implication of (A2) is the following.\ Define the truncated open cone
$$C_\kappa := \left\{x \in \SA_m : 1 < x_1 < 2, \ \left|\sum_{i=2}^mx_i^2\right|^{1/2} < \kappa x_1\right\}$$
for some $\kappa > 0$. For any orthogonal matrix $O$, $x \in \mathbb{R}^d$ and $a>0$, we let $OD$, $x+D$  and $aD$ denote respectively, the rotation of $D$ by $O$, translation of $D$ by $x$, and scaling of $D$ by $a$. Then (A2) implies:\\

\noindent \textbf{(A2')} There exists $\kappa > 0$ such that for any $x \in \SA_m$ and sufficiently small $a > 0$, there exists an orthogonal matrix $O_{a,x}$ such that $B(x,a,\kappa) :=  x + aO_{x,a}C_\kappa$ satisfies: any $y \in B(x,a,\kappa)$ is at least distance $a$ away from $W$. \\

\begin{figure}\
      \includegraphics[width=90mm,height=60mm]{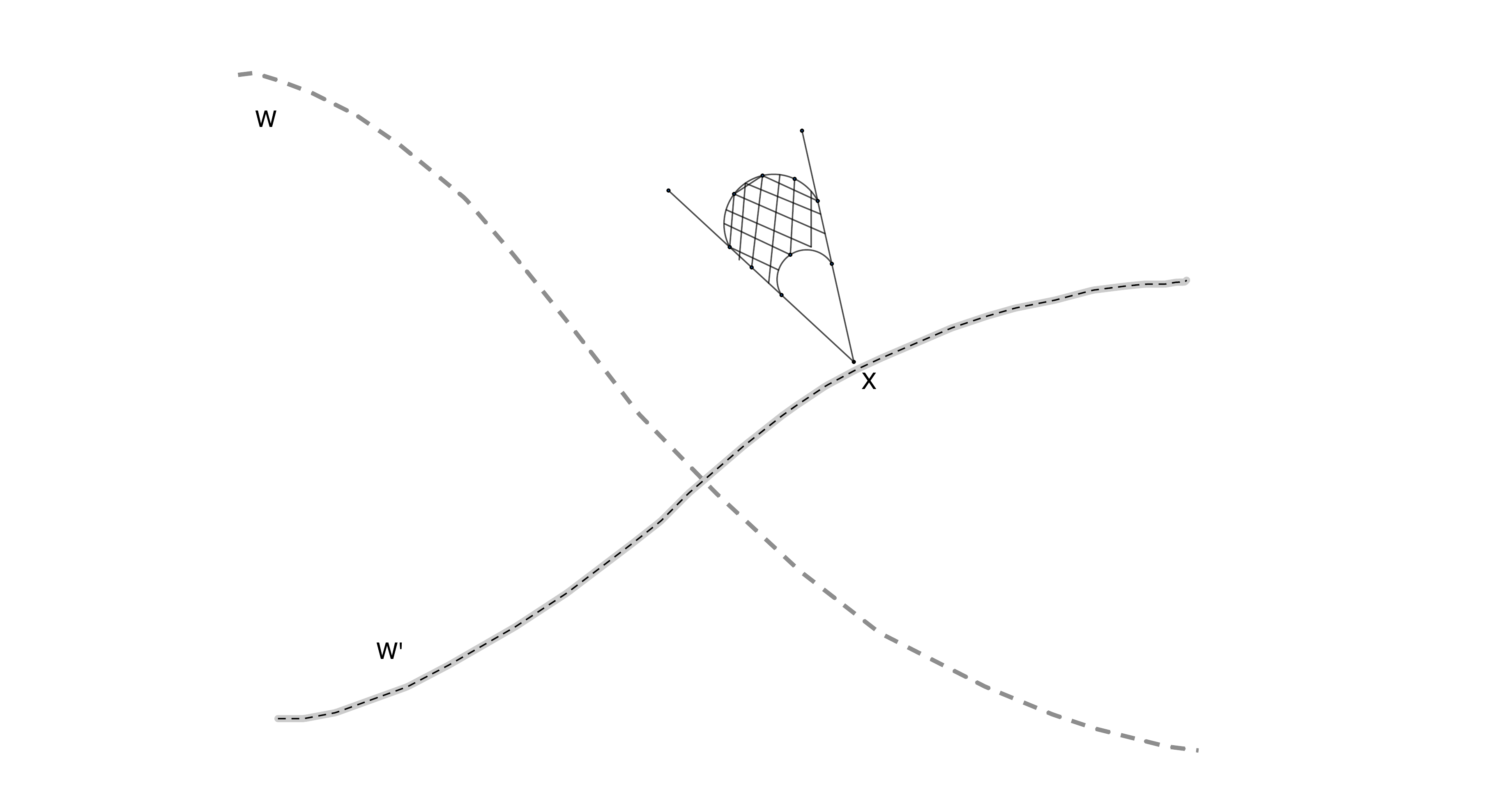}
      \caption{An illustration of Assumption 2'.\ }
      \label{A2'}
 \end{figure}

This means in particular that for any sufficiently small $a > 0$, we can plant a version of the truncated cone scaled down by $a$ near $x$ by means of suitable translation and rotation, in such a manner that it lies entirely in $W$. This ensures that any point in $\mathbb{R}^m$ cannot have points in the complement of $W$ arbitrarily close to it in all directions.\   This replaces (A5)$^*$.\  Next we consider (A6)$^*$. This is not appropriate for the `Markov noise' framework here, hence will have to be modified. We modify it by replacing $Q(x)$ there by $Q^n_i(x), 1 \leq i \leq m$, where $Q^n_{\xi(n)}(x(n))$ is the conditional covariance matrix of the random vector $[I\{\xi(n+1) = 1\}, \cdots , I\{\xi(n+1) = m\}]$ conditioned on $\xi(n), x(n)$, which is the same as `conditioned on $\F_n$' by virtue of conditional independence.\   Then $Q^n_i(x)$ has a $m_i\times m_i$ diagonal block $\bar{Q}^n_i(x)$, corresponding to rows and columns indexed by elements of $\N(i)$. Note also that $I\{\xi(n+1) = j\}$, $j \in \N(i)$, conditioned on $\xi(n), x(n)$, are conditionally Bernoulli random variables, albeit correlated.  The remaining rows and columns of $Q^n_i(x)$ are zero.  Thus $Q^n_i(x)$ is singular for each $i,n,$ and the obvious counterpart of (A6)$^*$,  which would require the least eigenvalue of the $Q^n_i(x)$'s to be bounded away from zero, is not tenable.\ However, a closer scrutiny of the arguments of section 4.3, \cite{BorkarBook}, specifically the last part of the proof of Lemma 16 there, shows that the actual requirement is weaker. We exploit this fact below.

An additional complication is that the smallest eigenvalue of the diagonal submatrices $\bar{Q}^n_i(x)$ is also zero because of the fact that $\sum_{j\in\N(i)}I\{\xi(n+1) = j\}= 1$ when $\xi(n) = i$  introduces degeneracy: the vector $\1 := [1, \cdots, 1]^T$ is always an eigenvector corresponding to eigenvalue $0$. However, our dynamics is confined to the probability simplex, a compact manifold with boundary, to which $\1$ is orthogonal. Thus we need to consider only the linear  transformations
\begin{eqnarray*}
&&y \in \R^{m_i} \mapsto \\
&&\SA^i := \{z \in \R^{m_i} : z_j \geq 0, 1 \leq  j \leq m_i, \sum_{j=1}^{m_i}z_j = 1\}.
\end{eqnarray*}
We show later that the least eigenvalue $\lambda_n(i)$ of $\bar{Q}^n_i(x)\Big|_{S^i}$ satisfies
\begin{equation}
\lambda_n(i) \geq  \frac{\varepsilon(n)}{m}, \label{ultimatebound}
\end{equation}
which in turn implies that
\begin{equation}
\bar{Q}_i(x)\Big|_{S^i} \geq \frac{\varepsilon(n)}{m} J_i\Big|_{S^i}, \label{compare}
\end{equation}
where $J_i :=$ the diagonal matrix with diagonal elements $= 1$ for rows and columns corresponding to $\N(i)$ and $= 0$ otherwise.
The inequality in (\ref{compare}) is w.r.t.\ the usual partial order for positive semidefinite matrices.
Denote by $I$ the $m$-dimensional identity matrix and by $D\pi^\alpha$ the Jacobian matrix of $\pi^\alpha$.
Also define
$$\varphi(n) = \Bigg(\sum_{k=n}^{s(n)}a(k)^2\varepsilon(k)\Bigg)^{\frac{1}{2}}.$$
where  $s(n) := \min\{k \geq n : \sum_{\ell = 1}^ka(k) \geq T\}$
for a prescribed $T > 0$. Then as in  p.\ 48, \cite{BorkarBook}, we have,
\begin{eqnarray}
\lefteqn{\frac{1}{\varphi(n)^2}\sum_{j=s(n)}^{s(n)+i-1}a(j)^2\times} \nonumber \\
&&\Bigg(\prod_{k=j+1}^{s(n)+i-1}(I + a(k)(D\pi^\alpha(x(n)) - I))\Bigg) \nonumber \\
&&\times Q_{\xi(n)}(x(n))\times  \nonumber \\
&&\Bigg(\prod_{k=j+1}^{s(n)+i-1}(I + a(k)(D\pi^\alpha(x(n)) - I))\Bigg)^T \nonumber \\
&&\geq \ \ \ \frac{1}{m\varphi(n)^2}\times  \nonumber
\end{eqnarray}
\begin{eqnarray}
\sum_{j=s(n)}^{s(n)+i-1}a(j)^2\Bigg(\prod_{k=j+1}^{s(n)+i-1}(I + a(k)(D\pi^\alpha(x(n)) - I))\Bigg)&&\nonumber \\
\times \varepsilon(k)J_{\xi(n)}\Bigg(\prod_{k=j+1}^{s(n)+i-1}(I + a(k)D\pi^\alpha(x(n)) - I))\Bigg)^T.&& \nonumber
\end{eqnarray}
\begin{equation}
\label{Bigeq}
\end{equation}
Define the random probability vector
$\nu(n) = [\nu_1(n), \cdots , \nu_s(n)]$ by
$$\nu_i(n) := \frac{\sum_{k=n}^{s(n)}a(k)^2\varepsilon(k)I\{\xi(k) = i\}}{\sum_{k=n}^{s(n)}a(k)^2\varepsilon(k)}$$
for $i \in S$.
Then an argument analogous to that of Lemma 6, pp.\  73-74, \cite{BorkarBook}, shows that a.s., every limit point $\pi^*$ of $\{\nu(n)\}$ is some stationary distribution $\pi^\alpha$ for $\{\xi(n)\}$. In particular, it has full support by virtue of (\ref{FP0}).\  By dropping to a further subsequence if necessary, consider a limit point of the r.h.s.\ of (\ref{Bigeq}). This will be of the form $\frac{1}{m}\int_0^t\Phi(T,s)(\sum_i\pi^*(i)J_i)\Phi(T,s)^Tds$ for some $t \geq 0$, where $\Phi( \cdot , \cdot )$ is the fundamental matrix for the linearization of the o.d.e.\  (\ref{scaled-rep}) restricted to $\SA_M$.\ This is clearly positive definite when restricted to $\SA_M$ $\big($because $\sum_i\pi^*(i)J_i$ is$\big)$.
The argument leading to Corollary 18 in \cite{BorkarBook}, pp.\  49, then goes through as before.

(A7)$^*$ is used in section 4.3, \cite{BorkarBook}, on p.\  50 alone.\ One key step in its application there is the use of the estimate of  trapping probability (i.e., the probability of convergence to a stable equilibrium conditioned on the iterates being in its domain of attraction), from Theorem 8, pp.\  37, \cite{BorkarBook}. This is used to conclude the proof in section 4.3 of \cite{BorkarBook}.\ That estimate cannot be used here because we are dealing with Markov noise.\ However, we can use the (stronger) concentration result from Theorem III.4, \cite{Karmakar} to conclude our desired result in a completely analogous manner. That said, we still need to verify, as in p.\ 50 of \cite{BorkarBook}, that

\begin{equation}
\sum_{k\geq n}\frac{1}{(k+1)^2} = o(\varphi(n)) =
 o\left(\sqrt{\sum_{k=n}^{s(n)}\left(\frac{\varepsilon(k)}{k+1}\right)^2}\right). \label{small-o}
\end{equation}
The l.h.s.\ is $\Theta\left(\frac{1}{n}\right)$. The r.h.s.\  is
$\Theta\left(\sqrt{\frac{T\varepsilon(s(n))^2}{s(n)}}\right) = \Theta\left(\frac{\varepsilon(n)}{\sqrt{n}}\right)$  because $s(n) = \Theta\left(ne^T\right)$.
Thus  (\ref{small-o})  amounts to $\frac{1/n}{\varepsilon(n)/\sqrt{n}} \to 0$, i.e.,
$\varepsilon(n) = \omega\left(\frac{1}{\sqrt{n}}\right)$.
This is the second condition in (\ref{varep}).

\medskip

(A8)$^*$ can be seen to hold in the interior of $\SA_m$, which is our state space of interest, because it follows from (\ref{FP0}) that the equilibria will be in the interior of $\SA_m$.

We have ignored the errors due to time variation of $\hat{\mu}_n, T(n)$ because they do  not affect the analysis. Both get multiplied by $a(n)$ and are therefore $o(a(n))$ in the `drift' (i.e., the driving vector field) of the algorithm and contribute only an asymptotically negligible error.  (See again the second bullet on p.\ 17 of \cite{BorkarBook} which applies to stochastic approximation with Markov noise as well.) The factor $a(n)\varepsilon(n)$ on the other hand multiplies the noise and therefore is what matters for `avoidance of traps'.\\

\noindent \textbf{Derivation of (\ref{ultimatebound}):}\\

For $\xi_n = i$,
\begin{eqnarray}
p_n(j) &:=& (1 - \varepsilon(n))p^\alpha_{ij}(x(n))I\{j \in \N(i)\} \nonumber \\
&&+ \ \varepsilon(n)I\{j \in \N(i)\}/m_i. \label{p-en}
\end{eqnarray}
Then  $p_n(j) \geq \frac{\varepsilon(n)}{m_i} \ \forall j \in \N(i)$. Fix $n$. Let $p = [p(1), \cdots , p(m_i)]$ be a probability vector in $S_0^i :=$ the simplex of probability vectors in $\R^{m_i}$ with each component $\geq \frac{\varepsilon(n)}{m_i}$ (in particular, $p_n(\cdot) \in S_0^i$).  Let $y = [y_1, \cdots , y_{m_i}]^T \in \R^{m_i}$ satisfy $\|y\|_2 = 1$ and $y\perp\1$ (i.e., $\sum_iy_i = 0$). Then
$$y^T\bar{Q}_i(x(n))y \geq \min_{p\in S_0^i}\left( \sum_{j\in\N(i)}p(j)y_j^2 - \left(\sum_j p(j)y_j\right)^2\right).$$
The function of $p(\cdot)$ in parentheses on the right is concave in $p(\cdot)$ for a fixed $x$ and will achieve its minimum at some corner of  $S_0^i$, say (without loss of generality) at
\begin{equation}
p := \left[1 - \frac{(m_i-1)\varepsilon(n)}{m_i}, \frac{\varepsilon(n)}{m_i}, \cdots , \frac{\varepsilon(n)}{m_i}\right]. \label{minprob}
\end{equation}
Then
\begin{eqnarray*}
y^T\bar{Q}_i(x(n))y &\geq& (1 - \varepsilon(n))y_1^2 + \frac{\varepsilon(n)}{m_i}\sum_iy_j^2 - \\
&&\left((1 - \varepsilon(n))y_1 + \frac{\varepsilon(n)}{m_i}\sum_iy_i\right)^2 \\
&=&  ((1 - \varepsilon(n)) - (1 - \varepsilon(n))^2)y_1^2 + \frac{\varepsilon(n)}{m_i} \\
&\geq& \frac{\varepsilon(n)}{m_i},
\end{eqnarray*}
where we use the identities $\sum_iy_i = 0, \ \sum_iy_i^2 = 1$. This completes the proof.\\

\noindent \textbf{Appendix III}\\

In this appendix, we provide an example of  $\{c(n)\}$ in (\ref{epreduce}).
Let $c(n) = \frac{1}{1 + (n+1)\log(n+1)}$ in (\ref{epreduce}). Then we have
 \begin{align*}
 \varepsilon(n) &= \prod_{k=1}^{n} \Big(1- \frac{1}{1 + (k+1) \log (k+1)}\Big) \varepsilon(0)\\
 & < \exp\Big( -\sum_{k=1}^{n}\frac{1}{1 + (k+1)\log (k+1)} \Big) \varepsilon(0)\\
 & <\exp \Big(- \log \log n \Big)\upsilon\varepsilon(0)\\
 & = \frac{\upsilon\varepsilon(0)}{\log n}
  \end{align*}
for some $\upsilon > 0$.  Thus $\varepsilon(n) = O\left(\frac{1}{\log n}\right)$. Next we show that $\varepsilon(n) = \Omega\left(\frac{1}{\log n}\right)$.  For this we use the fact for $x \in (0, 1)$,
  $$\log\left(\frac{1}{1-x}\right) \leq \frac{x}{1 - x} \Longrightarrow 1 - x \geq e^{-\frac{x}{1-x}}.$$
  Letting $\varepsilon(0) = 1$ without loss of generality,
  \begin{eqnarray*}
  \varepsilon(n) &=& \prod_{k=1}^{n}\left(1 - \frac{1}{1 + (k+1)\log(k+1)}\right) \\
  &\geq& \prod_{k=1}^n e^{-\frac{p_k}{1- p_k}} \ \mbox{for} \ p_k := \frac{1}{1 + (k+1)\log(k+1)} \\
  &=& e^{-\sum_{k=1}^n\frac{p_k}{1 - p_k}}.
  \end{eqnarray*}
As $p \downarrow 0$, $\frac{p}{1-p} = p(1 + o(1))$. Thus
$$\varepsilon(n) \geq e^{-\sum_{k=1}^n p_k(1 + o(1))}.$$
But
\begin{eqnarray*}
\sum_{k=1}^n p_k &\leq& p_1 + \int_0^n\frac{1}{1+(1+y)\log(1+y)}dy \\
&\leq& \log\log(n+1) + \log C'
\end{eqnarray*}
for suitable $C' > 0$. Hence for suitable $C > 0$,
\begin{eqnarray*}
\varepsilon(n) &\geq& Ce^{-(1 + o(1))\sum_{k=1}^n p_k}
\\
&\geq& Ce^{-(1 + \epsilon(n))\left(\log\log(n+1)\right)} \\
&&\mbox{where} \ \epsilon(n)\stackrel{n\uparrow \infty}{\to} 0,\\
&=& \frac{C}{(\log(n+2))^{1 + \epsilon(n)}}.
\end{eqnarray*}
That is, $\varepsilon(n) = \Theta((\log n)^{-1})$.

Using the above, it is easy to verify that $\{c(n)\}$ satisfies the stipulated conditions.\\

\end{document}